\newtheorem{thm}{Theorem}[section]
\newtheorem{cor}[thm]{Corollary}
\newtheorem{lem}[thm]{Lemma}
\newtheorem{prop}[thm]{Proposition}
\theoremstyle{definition}
\newtheorem{defn}[thm]{Definition}
\newtheorem{stasss}[thm]{Standing Assumption}
\theoremstyle{remark}
\newtheorem{rem}[thm]{Remark}
\numberwithin{equation}{section}
\newcommand{\abs}[1]{\left\vert#1\right\vert}
\newcommand{\set}[1]{\left\{#1\right\}}
\newcommand{\Real}{\mathbb R}
\newcommand{\Natural}{\mathbb N}
\newcommand{\such}{\, | \, }
\newcommand{\prob}{\mathbb{P}}
\newcommand{\expec}{\mathbb{E}}
\newcommand{\basis}{(\Omega,  \, (\F_t)_{t \in \Real_+}, \, \prob)}
\newcommand{\F}{\mathcal{F}}
\newcommand{\cadlag}{c\`adl\`ag\,}
\newcommand{\limsupn}{\limsup_{n \to \infty}}
\newcommand{\limn}{\lim_{n \to \infty}}
\newcommand{\plim}{{\prob \textrm{-} \lim}}
\newcommand{\plimn}{\plim_{n \to \infty}}
\newcommand{\pare}[1]{\left(#1\right)}
\newcommand{\bra}[1]{\left[#1\right]}
\newcommand{\cbra}[1]{\left\{#1\right\}}
\newcommand{\dbra}[1]{[\kern-0.15em[ #1 ]\kern-0.15em]}
\newcommand{\dbraco}[1]{[\kern-0.15em[ #1 [\kern-0.15em[}
\newcommand{\dbraoc}[1]{]\kern-0.15em] #1 ]\kern-0.15em]}
\newcommand{\dfn}{\, := \,}
\newcommand{\indic}{\mathbb{I}}
\newcommand{\tsigma}{\widetilde{\sigma}}
\newcommand{\cL}{\mathcal{L}}
\newcommand{\D}{\mathcal{D}}
\newcommand{\fC}{\mathfrak{C}}
\newcommand{\fs}{\mathfrak{s}}
\newcommand{\fv}{\mathfrak{v}}
\newcommand{\ey}{{}^\epsilon \kern-0.15em Y}
\newcommand{\ev}{{}^\epsilon \kern-0.15em V}
\newcommand{\eM}{{}^\epsilon \kern-0.27em M}
\newcommand{\wt}[1]{\widetilde{#1}}
\begin{document}

\title[Valuation equations for stochastic volatility models]{Valuation equations for stochastic volatility models}

\author[]{Erhan Bayraktar}
\thanks{E. Bayraktar is supported in part by the National Science Foundation under an applied mathematics research grant and a Career grant, DMS-0906257 and DMS-0955463, and in part by the Susan M. Smith Professorship.}
\address[Erhan Bayraktar]{Department of Mathematics, University of Michigan, 530 Church Street, Ann Arbor, MI 48104, USA}
\email{erhan@umich.edu}
\author[]{Constantinos Kardaras}\thanks{C. Kardaras is supported in part by the National Science Foundation under grant number DMS-0908461.}
\address[Constantinos Kardaras]{Department of Mathematics and Statistics, Boston University, 111 Cummington Street, Boston, MA 02215, USA}
\email{kardaras@bu.edu}
\author[]{Hao Xing }
\address[Hao Xing]{Department of Statistics, London School of Economics and Political Science, 10 Houghton st, London, WC2A 2AE, UK}
\email{h.xing@lse.ac.uk}
\thanks{We wish to thank A. Kuznetsov, H. Pham, N. Touzi, M. Urusov, G. \v{Z}{i}tkovi\'{c}, and most notably T. Salisbury and M. S\^{i}rbu for helpful discussions. The third author wishes to thank the Fields Institute, where part of this work was carried out. We are grateful to two anonymous referees and the Associate Editor for their valuable comments, which helped us improve this paper.}

\keywords{Stochastic volatility models, valuation equations, Feynman-Kac theorem, strict local martingales, necessary and sufficient conditions for uniqueness.}

\begin{abstract}

We analyze the valuation partial differential equation for European contingent claims in a general framework of stochastic volatility models where the diffusion coefficients may grow faster than linearly and degenerate on the boundaries of the state space. We allow for various types of model behavior: the volatility process in our model can potentially reach zero and either stay there or instantaneously reflect, and the asset-price process may be a strict local martingale.
Our main result is a necessary and sufficient condition on the uniqueness of classical solutions to the valuation equation: the value function is the unique nonnegative classical solution to the valuation equation among functions with at most linear growth if and only if the asset-price is a martingale.
\end{abstract}

\date{December 12, 2011}

\maketitle

\section{Introduction}\label{sec: intro}

Unlike the Black-Scholes model, stochastic volatility models are incomplete. For the purpose of valuing contingent claims written on the underlying asset, one typically postulates a diffusion model for the asset price and its volatility, formulated under a risk-neutral measure that is calibrated to market data. Due to the Markovian structure of stochastic volatility models, valuing a European contingent claim boils down to determining a value function, which is plainly the expectation (under the chosen risk-neutral measure) of the terminal payoff evaluated at the market's current configuration, including the current asset price, the level of the factor that drives the volatility, as well as the time-to-maturity. A way to determine this value function is by solving a partial differential equation (PDE), which we call the \emph{valuation equation}, heuristically derived by formally applying It\^{o}'s formula and utilizing a martingale argument.

However, as was pointed out in \cite{Heath-Schweizer}, it is surprisingly tricky to rigorously prove the aforementioned heuristic argument.
To begin with,
valuation equations in stochastic volatility models are typically degenerate on the boundaries of state space.
Therefore, the assumptions in standard versions of the Feynman-Kac formula (see e.g.  \cite[Chapter 6]{friedman-stochastic}) are not satisfied for many stochastic volatility models used in practice.

Moreover, the asset-price process in stochastic volatility models can be a strict local martingale; see \cite{Sin}, \cite{Andersen-Piterbarg}, \cite{Lions-Musiela}, \cite{Hobson}, and \cite{Lewis}.
(The loss of the martingale property relates to the notion of stock price \emph{bubbles}; see \cite{Heston-Loewenstein-Willard}, \cite{Cox-Hobson},
\cite{JPS} and \cite{Jarrow-Protter-Shimbo}. Similar situations have also been studied in markets without local martingale measures;
see \cite{DFern-Kar09}, \cite{Johannes}, and \cite{Fernholz-Karatzas-survey}.)
An important consequence of losing the martingale property, mentioned in \cite{Heston-Loewenstein-Willard}, is that the valuation equation may have multiple solutions. 
The strict local martingale property of the asset price may induce faster-than-quadratic growth in coefficients for valuation equations,
while the standard theory of either classical or viscosity solutions usually assume at most quadratic growth in coefficients before second derivative terms, see e.g. \cite{friedman-stochastic} and \cite{Fleming-Soner}.

In this paper, we study a general framework of stochastic volatility models, where coefficients are H\"{o}lder continuous, degenerate on boundaries of state space, and asset-price volatility coefficient may grow faster than linearly. In these models, we focus on the following questions:
\begin{enumerate}
	\item[(Q1)] How should one formulate the concept of a solution of the valuation equation (regarding smoothness and boundary conditions) in order to ensure that the value function is one such solution?
    \item[(Q2)] Given that (Q1) has been answered, what is the necessary and sufficient condition for the value function to be the unique solution in a certain class of candidate functions?
  \end{enumerate}

Equations with degenerating coefficients have been studied extensively, see e.g. \cite{Kohn-Nirenberg} and \cite{MR0457908}. More recently,
in order to study the free boundary of the porous medium equation, \cite{Daskalopoulous-Hamilton} and \cite{Daskalopoulous-Lee} investigated a linear degenerate equation, which is exactly the valuation equation in the Heston model.
Existence and uniqueness have been proven in a weighted H\"{o}lder space and regularity of solutions close to the degenerate region has also been
established in this case. In mathematical finance literature, existence and uniqueness questions for degenerate equations have been tackled for the case of local volatility models in \cite{Janson-Tysk}, \cite{Ekstrom-Tysk-bubble}, and \cite{Bayraktar-Xing}; for the case of interest rate models in \cite{Ekstrom-Tysk-term}. For stochastic volatility models, these questions have been discussed in \cite{Heath-Schweizer}, \cite{Ekstrom-Tysk-stochvol}, and \cite{Feehan}.
However valuation equations in general stochastic volatility models, whose coefficients may grow faster-than-linearly, have not been well understood yet.

Another natural analytical tool for analyzing degenerate equations is the theory of viscosity solutions. In this framework, it is usually assumed that model coefficients are globally Lipschitz in the state space (see e.g. \cite{Fleming-Soner} and \cite{Barles-et-al}). Therefore, standard techniques need to be extended to study equations whose coefficients are locally Lipschitz in the interior of the state space. See \cite{Amadori} and \cite{CDP2010} for recent developments in this direction. In these two papers, it is assumed that boundaries of the state space are not reached by the state process starting from the interior. To allow for various types of model behaviors, we study the situation where the state process can potentially reach zero and either stay there or instantaneously reflect. Moreover, comparing to the sufficient conditions for uniqueness of solutions to valuation equations in \cite{Amadori} and \cite{CDP2010},  our goal is to identify a necessary and sufficient condition for uniqueness or, equivalently, for the failure of uniqueness.

Rather than employing the analytical methods described above, some authors chose to use probabilistic methods to analyze degenerate equations. In Feller's seminal work \cite{Feller}, semi-group techniques were employed to study one-dimensional PDEs. According to the type of boundary points, different boundary conditions were specified to ensure the uniqueness of solutions. See \cite{Campiti} and references therein for recent development in this direction. On the other hand, \cite{Stroock-Varadhan} and \cite{MR606800} used martingale techniques to analyze these types of problems.

In this paper we employ a combination of probabilistic and analytical techniques to give a necessary and sufficient condition for the uniqueness of solutions to the valuation equation. To the best of our knowledge, this condition had not been identified in the literature. To derive this condition, our strategy is the following: First we identify a necessary and sufficient condition for uniqueness in the class of stochastic solutions (see Section~\ref{sec: stoch soln}), a notion introduced by Stroock and Varadhan in  \cite{Stroock-Varadhan}. Then, in the analytical part of the paper, we show that the value function is a classical solution (in the sense of Definition \ref{def: classical soln}), and that classical solutions are stochastic solutions, see Section~\ref{sec: proof of main thm}.

  Our main contributions can be stated as follows:
\begin{itemize}
\item The stochastic volatility models we analyze have degenerate coefficients on boundaries of the state space. Moreover, the volatility coefficient of the asset price is allowed to have faster than linear growth.
\item The volatility process can potentially reach zero. This extends results in \cite{Heath-Schweizer}, \cite{Amadori}, and \cite{CDP2010}. We classify the local behavior of the volatility process near zero and introduce notions of classical solutions in each scenario to answer (Q1).
\item The asset-price process can be a strict local martingale. We give an analytic condition which is necessary and sufficient for the martingale property of the asset price. This condition generalizes results in \cite{Lions-Musiela} and it is a stronger version of the condition in \cite{Sin}. Meanwhile, it is exactly the loss of martingale property that leads us to an answer to (Q2): uniqueness holds in the class of at most linear growth functions \emph{if and only if} the asset-price process is a martingale. This result complements the uniqueness result in \cite{Ekstrom-Tysk-stochvol}.
\end{itemize}
Our main result is presented in Theorems~\ref{thm: existence} and \ref{thm: uniqueness}. The former shows that the value function is the smallest nonnegative classical solution of the valuation equation, whereas the latter characterizes exactly when the valuation equation has a unique solution in a certain class of functions. Together with the results in Section~\ref{sec: martingale}, this gives us an analytic characterization of the uniqueness of solutions to the valuation equation.

\smallskip

The remainder of the paper is organized as follows. Our main results are presented in Section \ref{sec: main results}. The analytic necessary and sufficient condition on the martingale property of the asset-price process is explored in Section \ref{sec: martingale}. This provides an analytic characterization of the uniqueness obtained in Theorem \ref{thm: uniqueness}. Our main findings are proved progressively in Sections \ref{sec: value function}, \ref{sec: stoch soln} and \ref{sec: proof of main thm}. In particular, the notion of a stochastic solution is introduced in Section~\ref{sec: stoch soln} to bridge the analytic and the probabilistic properties of solutions to the valuation equation.

\section{Main results}\label{sec: main results}

\subsection{The model}
All stochastic processes in the sequel are defined on a filtered probability space $\basis$, satisfying the usual conditions. All relationships between random variables are understood in the $\prob$-a.s. sense. We denote $\Real_+ = [0,\infty)$ and $\Real_{++} = (0,\infty)$.

The following stochastic volatility model will be considered, written for the time being formally in differential form:
\begin{align*}
 dS_t &= S_t \, b(Y_t) \,dW_t, & S_0 = x \in \Real_{+}, \label{eq:sde-S} \tag{STOCK}\\
 dY_t &= \mu(Y_t) \,dt + \sigma(Y_t) \,dB_t,  & Y_0 = y \in \Real_{+}. \label{eq:sde-Y} \tag{VOL}
\end{align*}
Above, $W$ and $B$ are two standard Wiener processes with constant instantaneous correlation $\rho \in (-1, 1)$. In this model, the asset price is modeled by the dynamics of $S$, whose volatility is driven by an auxiliary process $Y$. To simplify notation, we assume the instantaneous short rate to be zero; we note, however, that all our results carry for the case of nonzero constant short rate, with obvious modifications. The dynamics in \eqref{eq:sde-S} imply that $\prob$ is a local martingale measure for the  asset-price process $(S_t)_{t \in \Real_+}$. As mentioned in the Introduction, we allow for the possibility that the latter process is a strict local martingale.

\begin{stasss}\label{ass: coeffs}
It will be tacitly assumed throughout the paper that the coefficients of \eqref{eq:sde-S} and \eqref{eq:sde-Y} satisfy the following:
\begin{enumerate}
 \item[(i)] The function $\mu: \Real_+ \rightarrow \Real$ satisfies $\mu(0) \geq 0$. The functions $\sigma, b: \Real_+ \rightarrow \Real_+$ are strictly positive on $\Real_{++}$, and satisfy $\sigma(0)=b(0)=0$. Also, $\mu$ and $\sigma$ have at most linear growth, i.e., there exists a positive constant $C$ such that
 \begin{equation}
  |\mu(y)| + \sigma(y) \leq C(1+y) \quad \text{ for } y\in \Real_+.\label{eq: mu sigma linear}
 \end{equation}
 \item[(ii)] $\mu$, $\sigma^2$, $b^2$, and $b \sigma$ are continuously differentiable on $\Real_+$ with locally $\alpha$-H\"{o}lder continuous derivatives for some $\alpha\in (0,1]$. Moreover, $(b^2)'$ has at most polynomial growth, i.e., there exist positive constants $C$ and $m$ such that
  \begin{equation}\label{eq: b poly}
    |(b^2)'(y)| \leq C\pare{1+y^m} \quad \text{ for } y\in \Real_+.
  \end{equation}
\end{enumerate}
\end{stasss}

Assumption \ref{ass: coeffs} (i) implies that \eqref{eq:sde-Y} admits a unique nonexplosive and nonnegative strong solution $Y^y$. Under Assumption \ref{ass: coeffs} (ii), $\sigma$ and $b$ may not be Lipschitz continuous on $\Real_+$, but both of them are locally $1/2$-H\"{o}lder continuous. Moreover, $b$ could grow faster than linearly.

\begin{rem}\label{remark: models}
The standing assumptions above are satisfied by most diffusion stochastic volatility models that are used in practice. For example:
\begin{itemize}
	\item in the \emph{Hull-White} model \cite{Hull-White}, $\mu (y) = ay$ with $a<0$, $\sigma(y) = \sigma y$ with $\sigma>0$;
	\item in the \emph{Heston} model \cite{Heston}, $\mu (y) = \mu_0-ay$ with $\mu_0>0$ and $a>0$, $\sigma(y)=\sigma\sqrt{y}$  with $\sigma>0$;
	\item in the \emph{GARCH(1,1)} model, $\mu(y) = \mu_0-ay$ with $\mu_0>0$ and $a>0$, $\sigma(y)=\sigma y$ with $\sigma>0$.
\end{itemize}
In all of the above models, $b(y) = \sqrt{y}$ for $y \in \Real_+$. When $b(y)=y$ for $y \in \Real_+$, we have the model proposed in \cite{Wiggins}.
\end{rem}

For given $(x, y) \in \Real_+^2$, the solution of \eqref{eq:sde-S} is given by the process $S^{x, y} \dfn x H^y$, where
\begin{equation}\label{eq: def H}
 H^y \dfn \exp\left\{\int_0^\cdot b(Y^y_t)\,dW_t - \frac12 \int_0^\cdot b^2(Y^y_t) \, dt\right\}.
\end{equation}
As $b$ is locally bounded on $\Real_+$ and $Y^y$ is nonexplosive, $\int_0^t b^2(Y_u)\, du<\infty$, hence $H^y_t > 0$, for any $t \in \Real_+$.
Define $\tau^y_0 := \inf\set{t \in \Real_{++} \such Y^y_t =0}$. It is possible that $\prob[\tau^y_0 < \infty] > 0$. In this case:
\begin{itemize}
	\item when $\mu(0)=0$, $Y^y_t = 0$ for $\tau^y_0 \leq t < \infty$, thus the point $0$ is \emph{absorbing};
	\item when $\mu(0)>0$, $Y^y$ is lead back into $\Real_{++}$ after $\tau^y_0$, and the point $0$ is \emph{instantaneously reflecting} (see Definition~3.11 in  \cite[Chapter VII]{Revuz-Yor})
\end{itemize}
Lemma \ref{lemma: local time Y} below shows that the local time of $Y^y$ at point $0$ is actually zero in the latter case.

\subsection{The valuation equation}
We consider a European option with a payoff function $g$ which satisfies the following assumption:

\begin{stasss}\label{ass: payoff}
 The function $g: \Real_+ \rightarrow \Real_+$ is nonnegative, continuous, and has at most linear growth, i.e., there exists a positive constant $M$ such that $g(x) \leq M(1+x)$ for $x\in \Real_+$. 
\end{stasss}

Recall that $g$ is of \emph{linear growth}, if $\eta:= \limsup_{x\rightarrow \infty} g(x)/x >0$, otherwise $g$ is of \emph{strictly sublinear growth}. Let us consider the smallest concave, nonnegative, and nondecreasing function $h$ that dominates $g$. It has been shown in \cite{CPH} that $h$ is the super-replication price for the payoff $g$. It is clear that $h(x)\leq M(1+x)$ for $x\in \Real_+$. Moreover, Lemma \ref{lem: h growth} below shows that $h$ has linear or strictly sublinear growth whenever $g$ does.

The value function $u : \Real_+^3 \rightarrow \Real_+$ of a European option with the payoff $g$ is defined via
\begin{equation*}\label{eq: def u}
 u(x, y, T) := \expec \bra{g\pare{S^{x, y}_T}}, \quad \text{ for } (x, y, T) \in \Real_+^3.
\end{equation*}
It is dominated by $h$. Indeed,
\begin{equation}\label{eq: u growth}
 u(x,y,t) = \expec\bra{g(S^{x,y}_t)} \leq \expec\bra{h(S_t^{x,y})} \leq h\pare{\expec[S^{x,y}_t]} \leq h(x), \quad (x,y,t)\in \Real_+^3.
\end{equation}
For $(x, y, T) \in \Real_+^3$, define a process $U^{x, y, T} = (U^{x, y, T}_t)_{t \in [0, T]}$ via $U^{x, y, T}_t := u(S^{x,y}_t, Y^{y}_t, T-t)$ for $t \in [0, T]$. The Markov property of $(S^{x, y}, Y^y)$ gives
\begin{equation}\label{eq: mart u}
U^{x, y, T}_t = \expec \bra{g\pare{S^{x, y}_T} \, |\, \F_t}, \quad  t \in[0,T].
\end{equation}
As $\expec \bra{g\pare{S^{x, y}_T}} < \infty$,  $U^{x, y, T}$ is clearly a martingale on $[0, T]$.

If $u$ is sufficiently smooth (at the moment, we are being intentionally vague on this point; we shall have more to say in Theorem \ref{thm: existence}), a formal application of It\^o's formula implies that the value function $u$ is expected to solve the valuation equation
\begin{equation}\label{eq: pricing eq} \tag{BS-PDE}
\begin{split}
 & \partial_T v (x,y,T) = \cL v (x,y,T), \quad (x,y,T) \in \Real_{++}^3,\\
 & v(x,y,0) = g(x), \hspace{2cm} (x,y)\in \Real_+^2,
\end{split}
\end{equation}
in which
\[
\cL := \mu(y) \partial_y + \frac12 b^2(y) x^2 \partial^2_{xx} +  \frac12 \sigma^2(y) \partial^2_{yy}  + \rho b(y) \sigma(y) x \partial^2_{x y}
\]
is the infinitesimal generator of $(S, Y)$. Since $b$ can grow faster than linearly, coefficients before second order derivatives above can grow faster than quadratically.

Further conditions are usually supplied to \eqref{eq: pricing eq} to guarantee that $u$ is the unique solution in a certain class of functions. 
To motivate these conditions, consider a solution $v$ to \eqref{eq: pricing eq}. If it is to be identified with $u$, it is clearly necessary that the process $V^{x, y, T} = (V^{x, y, T}_t)_{t \in [0, T]}$, defined via $V^{x, y, T}_t := v(S^{x,y}_t, Y^{y}_t, T-t)$ for $t \in [0, T]$ and $(x,y,T)\in \Real_{++}^3$, is at least a local martingale on $[0,T]$. Given $v\in C^{2,2,1}(\Real^3_{++})$, It\^{o}'s lemma implies that $V^{x, y, T}$ is a local martingale up to $\tau^y_0 \wedge T$. When $\prob[ \tau^y_0 < T] > 0$, it is reasonable to expect that some boundary condition at $y=0$ is needed to ensure that $V^{x, y, T}$ is still a local martingale after $\tau^y_0$ and up to $T$. When $\mu(0)=0$, the point $0$ is absorbing for $Y^y$. Since $b(0)=0$, we have $(S^{x, y}_t, Y^y_t) = (S^{x, y}_{\tau^y_0}, 0)$ for $\tau_0^y \leq t<\infty$. Therefore, we enforce the following Dirichlet boundary condition,
\begin{equation}\label{eq: boundary cond 2}
 v(x,0,T) = g(x), \quad (x,T)\in \Real^2_{++}.
\end{equation}
When $\mu(0)>0$, the boundary condition restricts the classical solution to the point-wise closure of the following class $\fC$.
\begin{defn}\label{def: class C}
A function $v: \Real_+^3 \rightarrow \Real_+$ is an element of $\fC$ if
\begin{enumerate}
\item[(i)] $v\in C(\Real_+^3) \cap C^{2,2,1}(\Real_{++}^3) \cap C^{0,1,1}(\Real_{++}\times \Real_+ \times \Real_{++})$,
\item[(ii)] $\limsup_{y\downarrow 0} b^2(y) \left|\partial^2_{xx} v(x,y,T)\right|<\infty$ for $(x,T)\in \Real_{++}^2$,
\item[(iii)] $0\leq v(x,y,T) \leq h(x)$ for $(x,y,T)\in \Real_+^3$ and
\item[(iv)] $\partial_T v(x,y,T) = \cL v(x,y,T)$ for $(x,y,T)\in \Real_{++}^3$.
\end{enumerate}
\end{defn}

We say a sequence $(v_n)_{n \geq 0}$ converges to $v$ point-wise, if $\lim_{n\rightarrow \infty} v_n(x,y,T) = v(x,y,T)$ for any $(x,y,T) \in \Real_+^3$. We denote by $\overline{\fC}$ the smallest set containing $\fC$ and closed under the point-wise convergence. Note that element of $\fC$ may not satisfy the initial condition in \eqref{eq: pricing eq}. In Theorem \ref{thm: existence} below, when $Y$ instantaneously reflects at zero, we will use a sequence of functions in $\fC$ with bounded initial conditions to approximate the value function.

Now let us now define what we mean by classical solutions to \eqref{eq: pricing eq}. The definition depends on whether $Y^y$ hits zero in finite time, which is characterized by Feller's test (see e.g. Theorem 5.5.29 in \cite{Karatzas-Shreve-BM}). Since the value function $u$ is nonnegative and dominated by $h$, in order to identify $u$ as a solution to \eqref{eq: pricing eq}, it suffices to consider nonnegative solutions which are dominated by $h$.

\begin{defn}\label{def: classical soln}
A function $v: \Real^3_+ \rightarrow \Real_+$ is called a classical solution (with growth domination $h$), if it satisfies conditions specified in each of the following cases (below, $y$ is arbitrary in $\Real_{++}$):
\begin{enumerate}
 \item[(A)] When $\prob[\tau^y_0=\infty]=1$: $v \in C(\Real_+^3) \cap C^{2,2,1}(\Real_{++}^3)$, $0\leq v\leq h$, and $v$ solves \eqref{eq: pricing eq}.

 \item[(B)] When $\prob[\tau^y_0<\infty] >0$ and $\mu(0)=0$: $v$ satisfies all conditions in Case (A) and  the boundary condition \eqref{eq: boundary cond 2}.

 \item[(C)] When $\prob[\tau^y_0<\infty]>0$ and $\mu(0)>0$: $v\in \overline{\fC} \cap C(\Real^3_+)$ and satisfies the initial condition $v(x,y,0)= g(x)$ on $\Real^2_+$.
\end{enumerate}
\end{defn}

A function $v$ is a super (sub)-solution to \eqref{eq: pricing eq}, if it satisfies properties in the previous definition where both equations in \eqref{eq: pricing eq} and in item (iv) in Definition~\ref{def: class C} are replaced by $\partial_T v \geq \cL v$ ($\partial_T v\leq \cL v$), respectively.

\begin{rem}
In Case (C) of the above definition, any $v\in \overline{\fC}$ satisfies $0\leq v\leq h$ on $\Real_+^3$. Moreover, it is, in fact, an element of $C^{2,2,1}(\Real_{++}^3)$ and solves $\partial_T v =\cL v$ on $\Real_{++}^3$. This is why we call $v$ a classical solution to \eqref{eq: pricing eq} in this case. Indeed, since $v\in \overline{\fC}$, there exists a sequence $\set{v_n}_{n \geq 0}$, with each $v_n \in \fC$, such that they converge to $v$ point-wise. Fix any compact domain $\D \subset \Real_{++}^3$. Since $\set{v_n}_{n \geq 0}$ is uniformly bounded from above by $h$ and the differential operator $\cL$ is uniformly elliptic on $\D$, it then follows from the \emph{interior Schauder estimate} (see e.g. Theorem~15 in \cite{friedman-parabolic} pp. 80) that $v \in C^{2,2,1}(\D')$ for any compact subdomain $\D' \subset \D$ and $v$ solves $\partial_T v = \cL v$ on $\D'$. Then the claim follows since the choice of $\D$ is arbitrary in $\Real_{++}^3$.
\end{rem}

\begin{rem}\label{rem: boundary cond}
 Boundary conditions are specified in Definition~\ref{def: classical soln} to identify the value function $u$ as the unique solution with growth domination $h$ (see Theorem \ref{thm: uniqueness} below). Therefore, even if the value function has certain regularity at boundaries, if these properties are not necessary for the proof of uniqueness, it is not included in Definition \ref{def: classical soln}. This is different from the point of view in \cite{Ekstrom-Tysk-stochvol}, where the value function is shown to satisfy a first order equation (see \eqref{eq: boundary cond 1} below), under additional assumptions on payoffs, no matter whether the process $Y$ visits the boundary or not.
\end{rem}

\subsection{Existence and uniqueness results}
The following are the main results of this paper. Their proofs are given in Section~\ref{sec: proof of main thm}.

\begin{thm}[Existence]\label{thm: existence}
The value function $u$ is a classical solution to \eqref{eq: pricing eq}. Moreover, it is the smallest classical solution.
\end{thm}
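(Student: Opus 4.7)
The plan is to establish existence by verifying that $u$ satisfies the regularity, the PDE, and the boundary/initial data stipulated in each of Cases~(A), (B), and~(C) of Definition~\ref{def: classical soln}, and then to establish minimality by a supermartingale comparison. The growth bound $0\le u\le h$ is already contained in~\eqref{eq: u growth}, and the initial condition $u(x,y,0)=g(x)$ will follow from continuity of $g$, continuity of the paths $t\mapsto S^{x,y}_t$, and a uniform integrability argument based on~\eqref{eq: u growth}.

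For the interior I would first deduce joint continuity of $u$ on $\Real_+^3$ from continuity of the flow $(x,y)\mapsto (S^{x,y}, Y^y)$ together with continuity of $g$ and a uniform integrability argument, and then exhaust $\Real_{++}^3$ by bounded open cylinders $\D$ bounded away from $\{x=0\}$ and $\{y=0\}$. On each such $\D$, Assumption~\ref{ass: coeffs}~(ii) makes $\cL - \partial_T$ uniformly parabolic with locally $\alpha$-H\"{o}lder continuous coefficients, so the Dirichlet problem with boundary data $u|_{\partial \D}$ admits a classical solution, and the Feynman--Kac formula for the stopped diffusion identifies this solution with $u$ on $\overline{\D}$. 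Interior Schauder estimates then give $u\in C^{2,2,1}(\Real_{++}^3)$ with $\partial_T u=\cL u$ pointwise, which settles Case~(A). For Case~(B), when $\mu(0)=0$ the point~$0$ is absorbing for $Y^y$, so $Y^0\equiv 0$ and, since $b(0)=0$, \eqref{eq:sde-S} gives $S^{x,0}\equiv x$; hence $u(x,0,T)=\expec[g(x)]=g(x)$ is the required Dirichlet condition.

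For Case~(C), I would truncate via $g_n:=g\wedge n$ and set $u_n(x,y,T):=\expec[g_n(S^{x,y}_T)]$. Since $g_n\uparrow g$ and $g(S^{x,y}_T)$ is integrable by~\eqref{eq: u growth}, monotone convergence yields $u_n\to u$ pointwise on $\Real_+^3$, so it suffices to show $u_n\in\fC$, whence $u\in\overline{\fC}$. Boundedness of $g_n$ together with the interior argument of the previous paragraph supply items (i), (iii), and (iv) of Definition~\ref{def: class C}, while the $C^{0,1,1}$-regularity up to $y=0$ follows from parabolic regularity near a non-characteristic inflow boundary since $\mu(0)>0$. The delicate point is condition~(ii), namely $\limsup_{y\downarrow 0} b^2(y)|\partial^2_{xx} u_n(x,y,T)|<\infty$, which I would extract either by differentiating the Feynman--Kac representation in~$x$ to bound $\partial_x u_n$ and then using the PDE in the form
\begin{equation*}
\tfrac12 b^2(y) x^2 \partial^2_{xx} u_n = \partial_T u_n - \mu(y)\partial_y u_n - \tfrac12\sigma^2(y) \partial^2_{yy} u_n - \rho b(y)\sigma(y)x\, \partial^2_{xy} u_n
\end{equation*}
to see that the right-hand side stays bounded as $y\downarrow 0$, or by invoking a weighted Schauder estimate in the spirit of \cite{Daskalopoulous-Hamilton, Daskalopoulous-Lee}.

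For minimality, let $v$ be any classical solution and fix $(x,y,T)\in\Real_+^3$. In Cases~(A) and~(B), I apply It\^{o} to $V_t:=v(S^{x,y}_t, Y^y_t, T-t)$: on $[0,\tau^y_0\wedge T]$ the drift vanishes because $\partial_T v=\cL v$, and in Case~(B) the Dirichlet condition combined with $S^{x,y}_t=S^{x,y}_{\tau^y_0}$ for $t\ge\tau^y_0$ shows that $V$ is constant after $\tau^y_0$, so $V$ is a nonnegative local martingale on $[0,T]$, hence a supermartingale, giving $v(x,y,T)=V_0\ge\expec[V_T]=\expec[g(S^{x,y}_T)]=u(x,y,T)$. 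In Case~(C), I instead take $v_n\in\fC$ with $v_n\to v$ pointwise, apply the It\^{o} argument to each $v_n$ using condition~(ii) of Definition~\ref{def: class C} and the vanishing local time of $Y^y$ at~$0$ (Lemma~\ref{lemma: local time Y}) to absorb the reflection contribution, yielding $v_n(x,y,T)\ge\expec[v_n(S^{x,y}_T, Y^y_T, 0)]$, and then pass to the limit by dominated convergence (using $0\le v_n\le h$) and the initial condition $v(\cdot,\cdot,0)=g$. The principal obstacle throughout is Case~(C): verifying the boundary growth bound~(ii) for the truncated value functions, and justifying that the local-time/reflection contribution in It\^{o}'s formula vanishes, so that the supermartingale comparison applies uniformly across the three cases.
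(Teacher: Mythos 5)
Your overall architecture — establish interior regularity via truncation and interior Schauder, handle the boundary cases separately, then prove minimality by a supermartingale comparison — matches the paper's, and Cases (A), (B), and the minimality step are essentially right (the paper organizes minimality more modularly, via the notion of stochastic solutions in Proposition~\ref{prop: existence} and Proposition~\ref{prop: classical->stochastic}, whereas you inline the It\^o/local-time argument, but the content is the same).

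The genuine gap is in Case~(C). Your approximating sequence $g_n := g \wedge n$ is merely bounded continuous — Assumption~\ref{ass: payoff} gives no smoothness of $g$ — so $u_n$ cannot be differentiated twice in $x$ by pulling the derivative under the expectation, and neither of your two fallbacks closes condition~(ii) of Definition~\ref{def: class C}. The PDE rearrangement
\[
\tfrac12 b^2(y) x^2 \partial^2_{xx} u_n = \partial_T u_n - \mu(y)\partial_y u_n - \tfrac12\sigma^2(y) \partial^2_{yy} u_n - \rho b(y)\sigma(y)x\, \partial^2_{xy} u_n
\]
does not help because the right-hand side contains the equally degenerate terms $\sigma^2(y)\partial^2_{yy}u_n$ and $b(y)\sigma(y)x\,\partial^2_{xy}u_n$, whose boundedness as $y\downarrow 0$ is exactly as unknown as the quantity you are trying to control; and the weighted Schauder machinery of \cite{Daskalopoulous-Hamilton, Daskalopoulous-Lee} is tailored to the Heston generator, not the general coefficients allowed here. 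Also, your claim that $C^{0,1,1}$ regularity up to $\{y=0\}$ ``follows from parabolic regularity near a non-characteristic inflow boundary'' is not justified: the operator degenerates there since $\sigma(0)=b(0)=0$, so this is not a standard non-characteristic boundary and one cannot invoke off-the-shelf up-to-the-boundary regularity.

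The paper's Lemma~\ref{lem: u_eps} avoids this by choosing a different approximating family: a mollified cutoff $g^\epsilon$ that is bounded, $C^\infty$, dominated by $h$, and — crucially — has $(g^\epsilon)'$ and $(g^\epsilon)''$ of compact support in $\Real_{++}$. That compact support gives
\[
x^2\,\partial^2_{xx}u^\epsilon(x,y,T)=\expec\bra{(S^{x,y}_T)^2\,(g^\epsilon)''(S^{x,y}_T)}
\]
bounded on $\Real_+^3$, so condition~(ii) holds with limit $0$, and the $\partial_T,\partial_y$ continuity up to $y=0$ is then taken from the Ekström--Tysk argument, which requires exactly this smooth, compactly supported-derivative structure on the payoff. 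You would need to replace $g\wedge n$ by such a mollified family (or supply a genuine alternative regularity argument) for Case~(C) to go through.
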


\begin{thm}[Uniqueness]\label{thm: uniqueness}
The following two statements hold:
\begin{enumerate}
\item[(i)] When $g$ is of strictly sublinear growth, $u$ is the unique classical solution with growth domination $h$.

\item[(ii)] When $g$ is of linear growth, $u$ is the unique classical solution with growth domination $h$ if and only if the asset price process $S$ is a martingale.
\end{enumerate}
Uniqueness holds if and only if the following comparison result holds. Let $v$ and $w$ be  classical super/sub-solutions with growth domination $h$. If $v(x,y,0) \geq g(x) \geq w(x,y,0)$ for $(x,y) \in \Real_+^2$, then $v \geq w$ on $\Real_+^3$.
\end{thm}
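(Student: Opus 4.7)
The plan is to separate the argument into an analytic half and a probabilistic half, connected by the notion of stochastic solution introduced in Section~\ref{sec: stoch soln}. For Theorem~\ref{thm: existence}, I would first establish continuity of $u$ on $\Real_+^3$ by combining joint continuity of $(x,y) \mapsto (S^{x,y}, Y^y)$ with dominated convergence against the bound $g(S^{x,y}_t) \leq M(1+S^{x,y}_t)$ and the supermartingale inequality $\expec[S^{x,y}_t]\leq x$. Next I would derive the interior regularity $u \in C^{2,2,1}(\Real_{++}^3)$ with $\partial_T u = \cL u$: on every compact $\D \subset \Real_{++}^3$ the operator $\cL$ is uniformly parabolic with locally $\alpha$-H\"older coefficients, so the interior Schauder estimates upgrade the continuous mild solution produced by \eqref{eq: mart u} to a classical interior solution. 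The remaining boundary behaviour required by Definition~\ref{def: classical soln} splits along three cases: Case~(A) needs nothing extra; in Case~(B), absorption of $Y$ at $0$ combined with $b(0)=0$ freezes $S$ at the hitting time and yields the Dirichlet condition $u(x,0,T)=g(x)$ pathwise; in Case~(C) I would approximate $g$ by bounded continuous $g_n \uparrow g$, argue that the corresponding value functions $u_n$ lie in $\fC$ by standard parabolic theory with bounded data, and conclude $u \in \overline{\fC}$ by pointwise monotone convergence. Minimality of $u$ among classical super-solutions is then obtained by applying It\^o's formula to $v(S^{x,y}_t, Y^y_t, T-t)$ along a localizing sequence avoiding $y=0$, using the Dirichlet condition in Case~(B) or the vanishing local time of $Y$ at $0$ (Lemma~\ref{lemma: local time Y}) in Case~(C) to cross the boundary; the resulting nonnegative local supermartingale together with Fatou's lemma yields $v(x,y,T) \geq \expec[g(S^{x,y}_T)] = u(x,y,T)$.

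For Theorem~\ref{thm: uniqueness}, minimality already gives $v \geq u$ for every classical solution, so uniqueness is equivalent to the reverse inequality, which is in turn equivalent to showing that the nonnegative local martingale $V^{x,y,T}_t \dfn v(S^{x,y}_t, Y^y_t, T-t)$ is actually a true martingale on $[0,T]$ closed by $g(S^{x,y}_T)$. Since $V \leq h(S^{x,y})$, everything reduces to uniform integrability of $(h(S^{x,y}_t))_{t \in [0,T]}$. In part~(i), strict sublinearity of $g$ gives strict sublinearity of $h$ via Lemma~\ref{lem: h growth}, so for every $\eps>0$ there exists $C_\eps$ with $h(x)\leq \eps x+C_\eps$; combined with $\expec[S^{x,y}_t]\leq x$ this yields $\expec[h(S^{x,y}_t)\indic_A] \leq \eps x + C_\eps \prob[A]$, which is a de~la~Vall\'ee-Poussin-type criterion for uniform integrability. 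In part~(ii), if $S$ is a true martingale on $[0,T]$ then $(S^{x,y}_t)_{t \in [0,T]}$ is UI (being closed by $S^{x,y}_T$) and so is $(h(S^{x,y}_t))_{t \in [0,T]}$ by the linear bound $h(x) \leq M(1+x)$. Conversely, when $S$ is a strict local martingale, the payoff $g(x)=x$ gives the counter-example $v(x,y,T) \equiv x$, which is a classical solution of \eqref{eq: pricing eq} dominated by $h(x)=x$, while $u(x,y,T) = \expec[S^{x,y}_T] < x$; a general linear-growth $g$ with $\eta := \limsup_{x \to \infty} g(x)/x > 0$ is handled by adding $\eta(x - \expec[S^{x,y}_T])$ to $u$, which is again a classical solution with the same initial data because the map $(x,y,T)\mapsto \expec[S^{x,y}_T]$ itself solves \eqref{eq: pricing eq}. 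Finally, the equivalence with the comparison statement is immediate in one direction (apply comparison to any two solutions), and in the other follows because the very same uniform-integrability arguments show that any super-solution $v$ with $v(\cdot,\cdot,0)\geq g$ satisfies $v\geq u$ and, symmetrically, any sub-solution $w$ with $w(\cdot,\cdot,0)\leq g$ satisfies $w\leq u$, whence $v\geq u\geq w$.

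The hardest step I anticipate is Case~(C) of Definition~\ref{def: classical soln}, where $\mu(0)>0$ and $Y$ instantaneously reflects at zero. The difficulty is twofold: verifying that the bounded-data approximations $u_n$ genuinely belong to $\fC$ requires the one-sided weighted estimate $\limsup_{y \downarrow 0} b^2(y) |\partial^2_{xx} u_n|<\infty$ and the regularity $C^{0,1,1}(\Real_{++}\times\Real_+\times\Real_{++})$, neither of which follows from interior Schauder theory since $\cL$ degenerates at $y=0$; I expect to need weighted H\"older estimates in the spirit of \cite{Daskalopoulous-Hamilton,Daskalopoulous-Lee}. Moreover, propagating these estimates through the pointwise limit so that both minimality and the local-martingale identification remain valid is precisely what motivates the detour through stochastic solutions: that intermediate notion absorbs the delicate boundary behaviour into a clean probabilistic statement and decouples the PDE regularity step from the optional-sampling step that drives the uniqueness dichotomy.
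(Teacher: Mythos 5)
Your overall architecture matches the paper's: reduce to the notion of stochastic solution, use optional sampling and uniform integrability to show that a stochastic solution dominated by $h$ must equal $u$ when the relevant UI holds, and exhibit $u+\eta\delta$ (with $\delta(x,y,T)=x-\expec[S^{x,y}_T]$) as a second solution when $S$ is a strict local martingale. The de la Vall\'ee-Poussin style splitting for strictly sublinear $h$, the closedness argument when $S$ is a true martingale, and the super/sub-solution version of comparison are all in line with the paper's Propositions \ref{prop: uniqueness} and \ref{prop: strtictly sublinear} and the closing paragraph of Section \ref{sec:fsb}. Two concrete gaps remain, however.

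First, you assert that $u+\eta\delta$ is ``again a classical solution'' without verifying the constraint $0\leq u+\eta\delta\leq h$, which is part of the definition of a classical solution with growth domination $h$. This is not automatic: one has to observe (Lemma \ref{lem: h growth}) that $f:=h-\eta\,\mathrm{id}$ is nonnegative, nondecreasing and concave, and then argue
$(u+\eta\delta)(x,y,T)=\expec[g(S^{x,y}_T)-\eta S^{x,y}_T]+\eta x\leq \expec[f(S^{x,y}_T)]+\eta x\leq f(\expec[S^{x,y}_T])+\eta x\leq f(x)+\eta x=h(x)$,
which is exactly \eqref{eq: u+eta delta}. Without this Jensen step the counterexample fails to live in the right class, and the ``only if'' direction of part (ii) is not established.

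Second, the statement ``uniqueness is equivalent to showing the nonnegative local martingale $V^{x,y,T}$ is a true martingale'' already presupposes that $V^{x,y,T}$ \emph{is} a local martingale for an arbitrary classical solution $v$. In Case (C) this is the genuine crux (Proposition \ref{prop: classical->stochastic} via Lemma \ref{lemma: class C}), and your outline for it is not adequate. ``Vanishing local time at $0$'' (Lemma \ref{lemma: local time Y}) is only one ingredient; the actual argument reflects $Y$ off the level $\epsilon$ by writing $\ey = Y\vee\epsilon$, applies It\^o--Tanaka--Meyer to $v(S,\ey,T-\cdot)$, isolates the boundary terms supported on $\{Y\leq\epsilon\}$ and on $dL(\epsilon)$, and then sends $\epsilon\downarrow 0$. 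To pass to the limit in expectation one needs the uniform $L^2$ bound $\sup_{\epsilon\in(0,1)}\expec[(L_{\sigma_n}(\epsilon))^2]<\infty$ of Lemma \ref{lem: loc time UI} together with the weighted second-derivative bound $\limsup_{y\downarrow0}b^2(y)|\partial^2_{xx}v|<\infty$ from the definition of $\fC$; right-continuity of $\epsilon\mapsto L(\epsilon)$ and Lemma \ref{lemma: local time Y} alone do not control the local-time integral uniformly in $\epsilon$. Relatedly, your plan to obtain the $\fC$-membership of the approximants via weighted H\"older estimates \`a la Daskalopoulos--Hamilton/Lee differs from the paper's route, which derives $C^{0,1,1}$ regularity and the weighted $\partial^2_{xx}$ bound probabilistically following \cite{Ekstrom-Tysk-stochvol}. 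That divergence is a legitimate alternative in principle, but it does not remove the need for the It\^o--Tanaka--Meyer step above, which concerns \emph{arbitrary} classical solutions, not just the value function.

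A smaller imprecision: uniform integrability of $(h(S^{x,y}_t))_{t\in[0,T]}$ indexed by deterministic times is not what is used; one needs the family indexed by $\sigma_n\wedge T$ along a localizing sequence (equivalently, class (DL)). Your $\eps x+C_\eps\,\prob[A]$ bound does extend to stopping times since $\expec[S^{x,y}_\tau]\leq x$, but this should be stated explicitly.
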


\begin{rem}
 Lemma~\ref{lem: h growth} below shows that $h$ has linear or strictly sublinear growth whenever $g$ does. Then the uniqueness are considered in the class of functions which have the same growth with $g$.
\end{rem}

\begin{rem}
 Our main contribution is the uniqueness theorem. In the classical theory of parabolic PDEs, a sufficient condition to ensure the uniqueness of classical solutions among the class of functions with at most polynomial growth is that coefficients before the second and first order spatial derivatives have at most quadratic and linear growth, respectively; see e.g. Corollary 6.4.4 in \cite{friedman-stochastic}. In stochastic volatility models considered in this paper, Theorem~\ref{thm: uniqueness} shows that uniqueness may fail among functions with at most linear growth if aforementioned growth conditions on coefficients are not satisfied. Multiple solutions are constructed via strict local martingales. Therefore, the martingale property of the asset price, which is characterized analytically in the next section, provides a necessary
 and sufficient condition for the uniqueness of classical solutions. This main result extends results in \cite{Bayraktar-Xing} for local volatility models. As we shall see in Section~\ref{sec: proof of main thm}, the proof of Theorem~\ref{thm: uniqueness} relies on probabilistic arguments. This is in contrast with the analytic approach used in \cite{Ekstrom-Tysk-stochvol}.
\end{rem}

\section{Characterizing the Martingale Property of the  Asset-Price Process}\label{sec: martingale}

In this section, we shall present a necessary and sufficient analytic condition for the martingale property of the asset price process, which is essentially $H^y$ (up to normalization with respect to the initial asset price). Combined with Theorem \ref{thm: uniqueness} (ii), this provides a necessary and sufficient analytic condition for the uniqueness of classical solutions for \eqref{eq: pricing eq} among functions with growth domination $h$.

Let us consider an auxiliary diffusion $\wt{Y}$ governed by the following formal dynamics:
\begin{equation}\label{eq: sde-tY}
  d\wt{Y}_t = \wt{\mu} (\wt{Y}_t) \, dt + \sigma(\wt{Y}_t) \, d B_t, \quad \wt{Y}_0=y,
\end{equation}
where $\wt{\mu} \dfn \mu + \rho b \sigma$. By our standing assumption, $\wt{\mu}$ is locally Lipschitz and $\sigma$ is locally $(1/2)$-H\"{o}lder continuous. Therefore \eqref{eq: sde-tY} has a unique nonnegative strong solution $\wt{Y}^y$, for all $y \in \Real_+$. However, due to the fact that $\wt{\mu}$ is only locally Lipschitz, the solution $\wt{Y}^y$ is defined up to an explosion time $\zeta^y$, and it might be the case that $\prob \bra{\zeta^y < \infty} > 0$. This has important consequences on the stochastic behavior of the asset-price process, as the following result demonstrates.

\begin{prop}\label{prop: mart S}
\begin{equation}\label{eq: expec S}
  \expec \bra{S^{x, y}_T} = x \, \expec \bra{H^y_T} =  x \, \prob \bra{\zeta^y > T}, \quad \text{ for all } (x, y, T) \in \Real_+^3.
 \end{equation}
Moreover, $\prob \bra{\zeta^{y_1} \leq \zeta^{y_2} } = 1$, holds whenever $y_1 \in \Real_+$ and $y_2 \in [0, y_1]$.
\end{prop}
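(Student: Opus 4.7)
The first equality in \eqref{eq: expec S} is immediate from $S^{x,y} = xH^y$, so the whole content is the identity $\expec\bra{H^y_T} = \prob\bra{\zeta^y > T}$. I would prove this via the standard Sin-type change of measure. Since $b$ is continuous and $Y^y$ is nonexplosive, $H^y$ is a nonnegative $\prob$-local martingale. Localize by $\tau_n \dfn \inf\cbra{t\in \Real_+ : Y^y_t \geq n}$; then $H^{y,\tau_n}$ is a true martingale on $[0,T]$ because $b$ is bounded on $[0,n]$ (Novikov is trivial), so the recipe $d\qprob_n/d\prob \dfn H^y_{\tau_n \wedge T}$ defines a probability measure on $\F_T$. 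Girsanov's theorem applied to the correlated pair $(W,B)$, where the relevant covariation is $\inner{B}{\int_0^\cdot b(Y^y_s)\,dW_s}_t = \int_0^t \rho\, b(Y^y_s)\,ds$, then shows that $\wt{B}_t \dfn B_t - \int_0^t \rho\, b(Y^y_s)\,ds$ is a $\qprob_n$-Brownian motion on $[0, \tau_n \wedge T]$, and that on this interval $Y^y$ satisfies the \eqref{eq: sde-tY} dynamics with drift $\wt{\mu} = \mu + \rho b \sigma$.

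Next, invoke pathwise uniqueness for \eqref{eq: sde-tY}, which holds by Yamada-Watanabe since $\wt{\mu}$ is locally Lipschitz and $\sigma$ is locally $(1/2)$-H\"older, to identify the $\qprob_n$-law of $Y^y$ stopped at $\tau_n \wedge T$ with the $\prob$-law of $\wt{Y}^y$ stopped at $\wt{\tau}_n \wedge T$, where $\wt{\tau}_n \dfn \inf\cbra{t \in \Real_+ : \wt{Y}^y_t \geq n}$. Applied to the $\F_{\tau_n \wedge T}$-event $\cbra{\tau_n > T}$, this yields
\[
 \expec\bra{H^y_T\,\indic_{\cbra{\tau_n > T}}} \,=\, \qprob_n\bra{\tau_n > T} \,=\, \prob\bra{\wt{\tau}_n > T}.
\]
Sending $n \to \infty$, the left-hand side tends to $\expec\bra{H^y_T}$ by monotone convergence (as $Y^y$ is nonexplosive under $\prob$, $\cbra{\tau_n > T} \uparrow \Omega$), while $\wt{\tau}_n \uparrow \zeta^y$ by the very definition of the explosion time of $\wt{Y}^y$, so the right-hand side tends to $\prob\bra{\zeta^y > T}$. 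This establishes \eqref{eq: expec S}.

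For the monotonicity $\prob\bra{\zeta^{y_1} \leq \zeta^{y_2}} = 1$ when $y_2 \leq y_1$, I would construct $\wt{Y}^{y_1}$ and $\wt{Y}^{y_2}$ simultaneously on a common probability space driven by the same Brownian motion $B$, and apply the one-dimensional pathwise comparison theorem (valid in the Yamada-Watanabe regime satisfied here): this gives $\wt{Y}^{y_2}_t \leq \wt{Y}^{y_1}_t$ for all $t$ strictly before both explosion times. Consequently $\wt{\tau}^{y_1}_n \leq \wt{\tau}^{y_2}_n$ for every $n$, and sending $n \to \infty$ yields $\zeta^{y_1} \leq \zeta^{y_2}$ almost surely.

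The principal technical obstacle is the clean execution of Girsanov's theorem for the correlated pair $(W, B)$: in practice one would either decompose $W = \rho B + \sqrt{1-\rho^2}\,B'$ with $B'$ a Brownian motion independent of $B$ and apply the two-dimensional Girsanov formula, or work directly via covariations as above and verify that $\wt{B}$ has quadratic variation $t$, hence is a $\qprob_n$-Brownian motion by L\'evy's characterization. A secondary point of care is that $\wt{\mu}$ is only locally Lipschitz, so $\wt{Y}^y$ may explode; the argument hinges precisely on identifying $\cbra{\zeta^y > T}$ as the monotone union $\bigcup_n \cbra{\wt{\tau}_n > T}$, which is the definition of the explosion time adopted for \eqref{eq: sde-tY}.
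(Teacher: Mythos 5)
Your argument is correct and is essentially the same as the paper's, which invokes Lemma 4.2 of Sin (and Lemma 2.3 of Andersen--Piterbarg) for \eqref{eq: expec S} and the standard one-dimensional comparison theorem (Karatzas--Shreve, Proposition 5.2.18) for the ordering of explosion times; you have simply unpacked the localization-plus-Girsanov-plus-Yamada--Watanabe chain that those cited lemmas carry out. The only caveats you would want to spell out if writing this in full are minor: the comparison theorem must be applied locally (localize at hitting times of $n$, using local Lipschitz continuity of $\wt{\mu}$ and the local $(1/2)$-H\"older modulus of $\sigma$) and $\set{\tau_n > T}$ should be noted to be measurable with respect to $\F_{\tau_n \wedge T}$ so that the transfer $\qprob_n[\tau_n > T] = \prob[\wt{\tau}_n > T]$ via uniqueness in law is licit.
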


\begin{rem}
 The assumption that $Y$ is nonexploding is essential. Without it, the representation \eqref{eq: expec S} may not hold. See \cite{Mijatovic-Urusov-counterexample} and \cite{Mijatovic-Urusov}.
\end{rem}

\begin{proof}
 Since $Y$ is nonexploding, \eqref{eq: expec S} follows from an argument similar to the one used in the proof of Lemma~4.2 in \cite{Sin}. Also, see Lemma~2.3 in \cite{Andersen-Piterbarg}. The fact that $\prob \bra{\zeta^{y_1} \leq \zeta^{y_2} } = 1$ holds follows from standard comparison theorems for SDEs  --- see e.g. Proposition 5.2.18 of \cite{Karatzas-Shreve-BM}.
 \end{proof}


Whether an explosion of $\wt{Y}$ happens or not is fully characterized by Feller's test, which we now revisit. With a fixed $c \in \Real_{++}$, the scale function $\fs$ for the diffusion described in \eqref{eq: sde-tY} is defined as
\[
 \fs(y)  := \int_c^y \exp\left\{-2 \int_c^\xi \frac{\wt{\mu}(z)}{\sigma^2(z)}  \,dz\right\} \, d \xi, \quad \text{ for } y \in \Real_{++}.
\]
We set
\[
 \fv(y) := 2\int_c^{y} \frac{\fs(y) - \fs(\xi)}{\fs'(\xi) \sigma^2(\xi)} \, d \xi \quad \text{ for } y \in \Real_{++}.
\]
Note that $\fv$ is increasing on $(c,\infty)$. Therefore, $\fv(\infty):=\lim_{y \uparrow \infty} \fv(y)$ is well defined. Feller's test (see e.g. Theorem 5.5.29 in \cite{Karatzas-Shreve-BM}) states that $\prob \bra{\zeta^y < \infty} > 0$ for $y \in \Real_{++}$ if and only if
\begin{equation}\label{feller test}
 \fv(\infty) < \infty.
\end{equation}
As was pointed out in \cite[Section 4.1]{Cherny-Engelbert}, it is sometimes easier to check the following equivalent condition:
\begin{equation}\label{feller test equiv}
 \fs(\infty) < \infty \quad \text{ and } \quad \frac{\fs(\infty) - \fs}{\fs' \sigma^2} \in L^1_{loc}(\infty-),
\end{equation}
where $L^1_{loc}(\infty-)$ denotes the class of functions $f: \Real_+ \rightarrow \Real$ that are Lebesgue integrable on $(y,\infty)$ for some $y>0$.

Combining \eqref{eq: expec S} and the above discussion, one obtains the following corollary of Proposition \ref{prop: mart S}, which is due to \cite{Sin}: \emph{$H^{y}$ is a martingale for all $y \in \Real_{++}$ if and only if \eqref{feller test}  fails to hold (or, equivalently,  if and only if  \eqref{feller test equiv} fails to hold)}. The previous statement implies that $H^{y}$ is a strict local martingale for some, and then all, $y \in \Real_{++}$ if and only if \eqref{feller test} (or \eqref{feller test equiv}) is satisfied. However, given that $H^{y}$ is a strict local martingale, it is not clear whether $H^{y}_{\cdot \wedge T}$ is still a strict local martingale for any $T>0$. The next result has is a stronger statement than the one previously made. Its proof requires some later results of this paper; therefore, we defer it to Section~\ref{sec: value function}.

\begin{prop}\label{prop: S strict local mart}
The following statements are equivalent:
 \begin{enumerate}
 \item $H^y_{\cdot \wedge T}$ is a strict local martingale for some, and then all $(y, T) \in \Real_{++}^2$.
 \item \eqref{feller test} (or, equivalently, \eqref{feller test equiv}) is satisfied.
 \end{enumerate}
\end{prop}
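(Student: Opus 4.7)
The plan is to prove the equivalence by showing that \eqref{feller test} holds if and only if $f(y,T) \dfn \expec[H^y_T] = \prob[\zeta^y > T]$ is strictly less than $1$ for every $(y,T) \in \Real_{++}^2$; since the continuous local martingale $H^y_{\cdot \wedge T}$ starts at $1$ and is nonnegative, this is exactly the assertion that it is a strict local martingale. The converse implication is immediate (if $\expec[H^y_T]<1$ for some $T>0$, then $\prob[\zeta^y\le T]>0$, which forces \eqref{feller test}), so the real content lies in the forward direction.

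My first step would be to realize $f$ as a value function and extract the PDE it satisfies. Taking the linear payoff $g(x)=x$, which clearly satisfies Assumption~\ref{ass: payoff}, the value function simplifies to $u(x,y,T) = x\,\expec[H^y_T] = x\,f(y,T)$. I would invoke Theorem~\ref{thm: existence} (this is precisely the reason the proposition is postponed to Section~\ref{sec: value function}) to assert that $u$ is a classical solution of \eqref{eq: pricing eq}, so in particular $u \in C^{2,2,1}(\Real_{++}^3)$ and hence $f \in C^{2,1}(\Real_{++}^2)$. Substituting $u=xf$ into \eqref{eq: pricing eq} and dividing by $x$ yields the linear homogeneous reduced equation
\begin{equation*}
 \partial_T f(y,T) \,=\, \wt\mu(y)\, \partial_y f(y,T) \,+\, \tfrac12 \sigma^2(y)\, \partial^2_{yy} f(y,T), \qquad (y,T) \in \Real_{++}^2,
\end{equation*}
with initial datum $f(\cdot,0) \equiv 1$, where $\wt\mu = \mu + \rho b \sigma$ is precisely the drift already appearing in \eqref{eq: sde-tY}.

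The second step is to apply the strong parabolic minimum principle to the nonnegative function $k \dfn 1-f$, which solves the same homogeneous equation with $k(\cdot,0) \equiv 0$. Since $\wt\mu$ and $\sigma^2$ are locally $\alpha$-H\"older and $\sigma^2>0$ on $\Real_{++}$, the operator is uniformly parabolic on every compact subset of $\Real_{++}\times(0,\infty)$, so the standard localisation of Nirenberg's strong maximum principle applies: if $k(y_0,T_0)=0$ at an interior point with $T_0>0$, then $k\equiv 0$ on the entire parabolic past $\Real_{++} \times [0,T_0]$. The third step closes the argument through a Markov iteration. Suppose for contradiction that \eqref{feller test} holds and yet $f(y_0,T_0)=1$ for some $T_0>0$. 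The second step yields $\prob[\zeta^{y'}>T_0]=1$ for \emph{every} $y' \in \Real_{++}$; combining the Markov property of $\wt Y$ at the deterministic time $T_0$ with this uniform statement gives $\prob[\zeta^y>2T_0]=\expec[\indic_{\{\zeta^y>T_0\}}\,\prob[\zeta^{y'}>T_0]\big|_{y'=\wt Y^y_{T_0}}] = 1$, and iterating produces $\prob[\zeta^y=\infty]=1$, contradicting \eqref{feller test}. Hence $f<1$ throughout $\Real_{++}\times(0,\infty)$.

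The step I expect to be the main obstacle is the very first one: asserting that the value function associated with the linear payoff $g(x)=x$ is a genuine classical solution on $\Real_{++}^3$ \emph{without} presupposing that $S$ is a martingale, which is exactly the property whose failure we are trying to detect. This is precisely what Theorem~\ref{thm: existence} delivers, and it explains why the proof of Proposition~\ref{prop: S strict local mart} must be deferred to Section~\ref{sec: value function}; once that existence-and-regularity input is in hand, the combination of the strong minimum principle with the Markov semigroup iteration is routine.
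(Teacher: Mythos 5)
Your proposal is correct and takes essentially the same route as the paper's own proof. The paper, just like you, realizes $I(y,T)\dfn\expec[H^y_T]=\prob[\zeta^y>T]$ as the value function with the linear payoff $g(x)\equiv x$, extracts its $C^{2,1}$-regularity and the reduced one-dimensional parabolic equation \eqref{eq: pde I} from Lemma~\ref{thm: u cont} (your invocation of Theorem~\ref{thm: existence} works too, but is heavier than needed and ordering-wise it is really Lemma~\ref{thm: u cont} that is available at this point in the paper), applies the strong maximum principle on compact subcylinders of $\Real_{++}\times(0,\infty)$ to propagate $I=1$ across the slab $\Real_{++}\times[0,T_0]$, and finally contradicts Feller's test by a Markov-doubling argument. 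The one place your write-up diverges is in packaging: the paper first introduces the auxiliary quantity $T_*=\sup\set{T:\mathcal S(T)\neq\emptyset}$, proves $T_*<\infty$ from Feller's test and the maximum principle, and then rules out $T_*>0$ by the Markov-property iteration; you dispense with $T_*$ and iterate directly from a putative point $(y_0,T_0)$ with $I(y_0,T_0)=1$, obtaining $\prob[\zeta^y=\infty]=1$ for all $y$ and hence a contradiction with \eqref{feller test}. That streamlining is legitimate and equivalent. One small point worth flagging explicitly (which the paper handles in a parenthetical): in the Markov iteration you need $\prob_{y'}[\zeta>T_0]=1$ also for $y'=0$, not just $y'\in\Real_{++}$; this follows either from the monotonicity $\prob[\zeta^{y_1}\leq\zeta^{y_2}]=1$ for $y_1\geq y_2$ of Proposition~\ref{prop: mart S}, or from the continuity of $I$ up to the boundary established in Lemma~\ref{thm: u cont}.
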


Note that when $H^y$ is a martingale for all $y\in \Real_{++}$, $H^0$ is a martingale as well because of the monotonicity of $\Real_+ \ni y \mapsto \prob[\zeta^y > T]$ in $y$ for fixed $T \in \Real_+$ --- see Proposition~\ref{prop: mart S}.
In view of Proposition \ref{prop: S strict local mart}, when we are referring to the martingale property of the  asset-price process, we mean that $H^y$ is a martingale for all $y \in \Real_+$.

\begin{rem}
Proposition~\ref{prop: S strict local mart} implies that if $H^y$ is going to lose its martingale property eventually, it must lose its martingale property immediately. This result generalizes Theorem~2.4 in \cite{Lions-Musiela}, where a sufficient condition and a different necessary condition are given such that $H^y_{\cdot \wedge T}$ is a strict local martingale for any fixed $T\in \Real_{++}$. Proposition~\ref{prop: S strict local mart} closes the gap between these two conditions in \cite{Lions-Musiela}. When the boundary point $0$ is absorbing, Proposition~\ref{prop: S strict local mart} is contained in the main result of \cite{Mijatovic-Urusov}. However Proposition~\ref{prop: S strict local mart} also treats the case when the boundary point is instantaneously reflecting.

One should note, however, that when the dynamics in the stochastic volatility model are not time-homogeneous, the  asset price may lose its martingale property only at a later time, as can be seen from an example in Section 2.2.1 in \cite{Cox-Hobson}.
\end{rem}

\section{Smoothness of the Value Function}\label{sec: value function}
In this section we shall prove $u\in C(\Real_+^3) \cap C^{2,2,1}(\Real^3_{++})$, as well as Proposition~\ref{prop: S strict local mart}, an important corollary of this result. Let us start with a technical result on the stability of solutions of \eqref{eq:sde-S} and \eqref{eq:sde-Y} with respect to their initial values.

\begin{lem}\label{lemma: cont L0}
 Pick any $(x,y,T) \in \Real_+^3$, and any sequence $\set{(x_n, y_n, T_n)}_{n\in \Natural}$ which converges to $(x,y,T)$. Then,
 \begin{equation}\label{eq: cont L0}
  \plimn Y^{y_n}_{T_n} = Y^y_T \quad \text{ and } \quad \plimn S^{x_n,y_n}_{T_n} =S^{x,y}_T,
\end{equation}
where ``$\plim$'' denotes limit in $\prob$-measure.
\end{lem}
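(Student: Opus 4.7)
My plan is to first establish a pathwise stability estimate for $Y^{y_n}\to Y^y$, uniform on compact time intervals in probability, and then deduce both claims in \eqref{eq: cont L0}: the one for $Y$ from path continuity of $Y^y$, and the one for $S$ from the exponential representation $S^{x,y}=xH^y$ in \eqref{eq: def H}. All processes are coupled on the same basis $\basis$ driven by the same $B$ (and hence the same correlated $W$).

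\textbf{Stability of $Y$ via Yamada--Watanabe.} Fix $T^*>T$. Assumption~\ref{ass: coeffs}(ii) implies $\mu\in C^1$, hence locally Lipschitz; continuous differentiability of $\sigma^2$ together with $\sigma\ge 0$ yields the H\"older estimate $\abs{\sigma(y)-\sigma(y')}\le C_N\abs{y-y'}^{1/2}$ on each $[0,N]$, placing us precisely in the Yamada--Watanabe regime. Localize by $\tau_N^n\dfn\inf\cbra{t\ge 0:Y^{y_n}_t\vee Y^y_t\ge N}$; the linear growth bound \eqref{eq: mu sigma linear} and boundedness of $\set{y_n}$ give $\sup_n\prob[\tau_N^n\le T^*]\to 0$ as $N\to\infty$ via standard moment estimates. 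Apply It\^o's formula to $\phi_k\pare{Y^{y_n}_{\cdot\wedge\tau_N^n}-Y^y_{\cdot\wedge\tau_N^n}}$, where $\set{\phi_k}$ is the standard Yamada sequence of $C^2$ approximants to $\abs{\cdot}$ with $\abs{\phi_k'}\le 1$ and $x\phi_k''(x)\le 2/\log k$. Under expectation, the martingale piece vanishes, the second-order piece is bounded by $C_N^2T^*/\log k$, and the drift piece contributes $L_N\int_0^t\expec\bra{\abs{Y^{y_n}_{s\wedge\tau_N^n}-Y^y_{s\wedge\tau_N^n}}}\,ds$, with $L_N$ the Lipschitz constant of $\mu$ on $[0,N]$. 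Sending $k\to\infty$ and applying Gronwall gives $\expec\bra{\abs{Y^{y_n}_{t\wedge\tau_N^n}-Y^y_{t\wedge\tau_N^n}}}\le \abs{y_n-y}\,e^{L_NT^*}$; combined with tightness of path laws (to upgrade pointwise-in-$t$ $L^1$ convergence to uniform convergence in probability on $[0,T^*]$) this yields $\plimn\sup_{t\in[0,T^*]}\abs{Y^{y_n}_t-Y^y_t}=0$.

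\textbf{Passage to $T_n$ and stability of $S$.} Path continuity of $Y^y$ gives $Y^y_{T_n}\to Y^y_T$ almost surely, so the triangle inequality combined with the previous step produces $\plimn Y^{y_n}_{T_n}=Y^y_T$. Writing $S^{x_n,y_n}_{T_n}=x_nH^{y_n}_{T_n}$, continuity and local boundedness of $b$ together with uniform-on-compacts convergence $Y^{y_n}\to Y^y$ in probability imply $\int_0^{T_n}b^2(Y^{y_n}_s)\,ds\to \int_0^Tb^2(Y^y_s)\,ds$ in probability, while a standard stability result for It\^o integrals (convergence in probability of the integrand on compacts, together with continuity in $t$ of the limiting integral to handle $T_n\to T$) yields the analogous convergence for $\int_0^{T_n}b(Y^{y_n}_s)\,dW_s$. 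Exponentiating and multiplying by $x_n\to x$ completes the proof.

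\textbf{Main obstacle.} The only non-routine ingredient is the $1/2$-H\"older (rather than Lipschitz) regularity of $\sigma$, which rules out a direct Gronwall estimate on $\expec[(Y^{y_n}_t-Y^y_t)^2]$. The Yamada approximation is engineered exactly to convert this H\"older regularity into a controlled $O(1/\log k)$ second-order correction; every other step (localization via linear growth, Gronwall, path continuity, It\^o-integral stability, exponentiation) is standard and presents no new difficulty.
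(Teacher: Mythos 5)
Your macro-strategy matches the paper's: establish uniform-on-compacts (ucp) stability of $Y^{y_n}\to Y^y$, then pass to $S$ through the exponential representation \eqref{eq: def H}. The paper outsources the first step to a result of Bahlali (their equation (4.2), an $L^2$-supremum bound) and then does the work on $S$ by hand via the It\^o isometry, exploiting the polynomial-growth hypothesis \eqref{eq: b poly} on $(b^2)'$ to get uniform integrability. You instead work on the $Y$ step directly by Yamada--Watanabe and then invoke a dominated-convergence-for-stochastic-integrals black box for $S$. That division of labor is a legitimate alternative and, once you have genuine ucp convergence and localize, the black box does apply (on the event that both $Y^{y_n}$ and $Y^y$ stay in a compact, $b$ restricted there is bounded, so no further growth hypothesis is needed).

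The genuine gap is in the ucp upgrade. The Gronwall step yields only the pointwise-in-$t$ bound
$\sup_{t\le T^*}\expec\bra{\abs{Y^{y_n}_{t\wedge\tau^n_N}-Y^{y}_{t\wedge\tau^n_N}}}\le\abs{y_n-y}\,e^{L_NT^*}$,
and you then assert that ``tightness of path laws'' promotes this to $\plimn\sup_{t\le T^*}\abs{Y^{y_n}_t-Y^y_t}=0$. That step fails: tightness is a statement about weak convergence of path laws and cannot identify a limit in probability on the given probability space, nor does pointwise-in-$t$ $L^1$ control of the difference imply control of the running supremum. The correct repair is to decompose the localized difference into its finite-variation and martingale parts. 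The drift part is controlled by the same Gronwall bound. For the martingale part $M^n=\int_0^{\cdot\wedge\tau^n_N}\pare{\sigma(Y^{y_n})-\sigma(Y^y)}\,dB$, the Burkholder--Davis--Gundy (or Doob $L^1$) inequality gives
$\expec\bra{\sup_{t\le T^*}\abs{M^n_t}}\le C\,\expec\bra{\langle M^n\rangle_{T^*}^{1/2}}$,
and the local $1/2$-H\"older bound on $\sigma$ makes $\langle M^n\rangle_{T^*}\le C_N^2\int_0^{T^*}\abs{Y^{y_n}_{s\wedge\tau^n_N}-Y^y_{s\wedge\tau^n_N}}\,ds$, so the pointwise $L^1$ estimate already controls the quadratic variation in $L^1$. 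Combining and removing the localization via $\sup_n\prob[\tau^n_N\le T^*]\to 0$ gives the desired ucp convergence. With that repair the rest of your argument goes through.
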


\begin{proof}
 The stability properties of solutions for \eqref{eq:sde-Y} have been well-studied under the linear growth assumption \eqref{eq: mu sigma linear} (see e.g. \cite{Bahlali}). In fact, \eqref{eq: cont L0} follows from Theorem~2.4 in \cite{Bahlali}, which shows
 \begin{equation}\label{eq: Y stability}
  \lim_{n\rightarrow \infty} \expec\bra{\sup_{0\leq u\leq t+\delta} \left|Y^{y_n}_u - Y^y_u\right|^2} = 0, \quad \text{ for any } \delta>0,
 \end{equation}
 and the fact that $\expec\bra{\left|Y^{y}_{t_n} - Y^y_t\right|^2} \leq C(1+y^2) |t-t_n|$ for some $C$ --- see Problem~5.3.15 in \cite{Karatzas-Shreve-BM}.

 For the stability of $S$, it suffices to show that $\plimn \log{H^{y_n}_{t_n} = \log{H^y_t}}$. In the next paragraph, we will prove that
 \begin{equation}\label{eq: stoch int conv}
  \lim_{n\rightarrow \infty} \expec\bra{ \abs{\int_0^t b\pare{Y^y_u} \, dW_u - \int_0^{t_n} b\pare{Y_u^{y_n}} \, dW_u}^2} = 0.
 \end{equation}
 The fact that $\lim_{n\rightarrow \infty} \expec\bra{ \abs{\int_0^t b^2\pare{Y^y_u} \, du - \int_0^{t_n} b^2 \pare{Y_u^{y_n}} \, du}}=0$ can be shown in a similar fashion. Then, $\plimn \log{H^{y_n}_{t_n} = \log{H^y_t}}$ follows from these two identities.

To estimate the left-hand-side of \eqref{eq: stoch int conv}, we use It\^o's isometry to get
 \[
  \expec\bra{\abs{\int_0^t b\pare{Y^y_u} \, dW_u - \int_0^{t_n} b\pare{Y_u^{y_n}} \, dW_u}^2} \leq 2 \, \expec\bra{\int_0^{t_n} \pare{b(Y^y_u) - b(Y^{y_n}_u)}^2\, du} + 2 \, \expec\bra{\left|\int_{t_n}^t b^2(Y^y_u)\, du \right|}.
 \]
 Let $n$ be large enough (greater than or equal to, say, some $N(\delta)$) so that $t_n \leq t+\delta$ and $y_n \leq y+\delta$ for some $\delta>0$.
 Since drift and volatility of $Y^y$ have at most linear growth, it follows  that
 \begin{equation}\label{eq: moment Y}
  \expec [ \sup_{t \in [0, T]} \abs{Y^y_t}^m ] \leq C_{m,T}(1+y^m) \quad \text{ for any } m>0.
 \end{equation}
 On the other hand, \eqref{eq: b poly} implies that $b(y) \leq C(1+y^k)$ for some constants $k$ and $C$. Combining the previous two inequalities with \eqref{eq: moment Y}, we have $\expec\bra{\sup_{u\leq t+\delta} b^2(Y^y_u)} \leq C_{\delta, y}$, for some constant $C_{\delta, y}$. As a result, $\lim_{n\rightarrow \infty}\expec\bra{\left| \int_{t_n}^t b^2(Y^y_u)\, du \right|} \leq  \lim_{n\rightarrow \infty} C_{\delta, y} |t-t_n| =0$. On the other hand, since $b$ is locally H\"{o}lder continuous on $\Real_+$, then for any $M>0$, there exist constants $\alpha\in (0,1]$ and $C_M$ such that $|b(x) - b(y)|^2 \leq C_M |x-y|^{2 \alpha}$ for any $x, y \leq M$. As a result, for any $u\leq t_n$,
 \begin{equation}\label{eq: est b2}
 \begin{split}
  & \expec\bra{\pare{b(Y^y_u) - b(Y^{y_n}_u)}^2}\\
  & \quad = \expec\bra{\pare{b(Y^y_u) - b(Y^{y_n}_u)}^2 \, \indic_{\set{Y^y_u \leq M, \, Y^{y_n}_u \leq M}}} + \expec\bra{\pare{b(Y^y_u) - b(Y^{y_n}_u)}^2 \, \indic_{\set{Y^y_u > M \text{ or } Y^{y_n}_u > M}}}\\
  & \quad \leq C_M \expec\bra{\left|Y^y_u - Y^{y_n}_u\right|^{2\alpha}} + C \,\expec\bra{\pare{2+ \pare{Y^y_u}^{2k} + \pare{Y^{y_n}_u}^{2k}} \, \indic_{\set{Y^y_u > M \text{ or } Y^{y_n}_u > M}}}.
 \end{split}
 \end{equation}
Since $\expec\bra{\left|Y^y_u - Y^{y_n}_u\right|^{2\alpha}} \leq \expec\bra{\left|Y^y_u - Y^{y_n}_u\right|^{2}}^{\alpha}$ holds by Jensen's inequality, it follows from \eqref{eq: Y stability} that the first term on the right-hand-side of \eqref{eq: est b2} converges to zero as $n\rightarrow \infty$. For the second term,
 observe that $\sup_{n \in \Natural} \expec \bra{\pare{Y^{y_n}_u}^{4k}} < \infty$ implies that $\set{\pare{Y^{y_n}_u}^{2k}}_{n \in \Natural}$ is a uniformly integrable family; therefore,
\[
\limsupn \expec\bra{\pare{2+ \pare{Y^y_u}^{2k} + \pare{Y^{y_n}_u}^{2k}} \, \indic_{\set{Y^y_u > M \text{ or } Y^{y_n}_u > M}}} \leq \expec\bra{\pare{2+ 2 \pare{Y^y_u}^{2k}} \, \indic_{\set{Y^y_u \geq M }}}
\]
and the last expression is further dominated by
$
\expec\bra{\pare{2+ 2 \sup_{u \in [0, t + \delta]} \pare{Y^y_u}^{2m}} \, \indic_{\set{ \sup_{u \in [0, t  + \delta]} Y^y_u \geq M }}}.
$
It then follows that
 \[
  \limsupn \int_0^{t_n} \expec\bra{\pare{b(Y^y_u) - b(Y^{y_n}_u)}^2}\, du \leq C(t+\delta)\, \expec\bra{\pare{1+ 2 \sup_{u \in [0, t + \delta]} \pare{Y^y_u}^{2k}} \, \indic_{\set{ \sup_{u \in [0, t  + \delta]} Y^y_u \geq M }}},
 \]
 for some constant $C$.
 Sending $M\rightarrow \infty$, we have that the right-hand-side of last inequality converges $0$ thanks to \eqref{eq: moment Y} and the dominated convergence theorem. This concludes the proof of \eqref{eq: stoch int conv}.
\end{proof}

Now comes the first step towards proving Theorem \ref{thm: existence}.

\begin{lem}\label{thm: u cont}
 $u\in C(\Real_+^3) \cap C^{2,2,1}(\Real_{++}^3)$ and it satisfies \eqref{eq: pricing eq}.
\end{lem}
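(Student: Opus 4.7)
The plan is to split the proof into (i) continuity of $u$ on $\Real_+^3$ and (ii) $C^{2,2,1}$-regularity plus the PDE on $\Real_{++}^3$, since in the interior we can invoke classical parabolic theory, while continuity up to the boundary needs a probabilistic stability argument.

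\textbf{Continuity on $\Real_+^3$.} Fix $(x, y, T) \in \Real_+^3$ and a sequence $(x_n, y_n, T_n) \to (x, y, T)$. Lemma \ref{lemma: cont L0} gives $\plim_n S^{x_n, y_n}_{T_n} = S^{x, y}_T$, so by continuity of $g$, $\plim_n g(S^{x_n, y_n}_{T_n}) = g(S^{x, y}_T)$. To upgrade this to $u(x_n, y_n, T_n) \to u(x, y, T)$, I would establish uniform integrability of $\cbra{g(S^{x_n, y_n}_{T_n})}_{n \in \Natural}$. Since $g(s) \leq M(1+s)$ and $S^{x_n, y_n}_{T_n} = x_n H^{y_n}_{T_n}$ with $x_n$ bounded, this reduces to the uniform integrability of $\cbra{H^{y_n}_{T_n}}_{n \in \Natural}$. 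Each $H^{y_n}$ is a nonnegative local martingale starting at $1$, so the family is $L^1$-bounded; what is needed on top of this is Scheff\'e's lemma. Combined with the probability convergence of $H^{y_n}_{T_n}$ to $H^y_T$, it yields $L^1$-convergence (hence uniform integrability) once $\expec[H^{y_n}_{T_n}] \to \expec[H^y_T]$. By Proposition \ref{prop: mart S}, this is equivalent to the continuity of $(y, T) \mapsto \prob[\zeta^y > T]$ at the limit point.

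The continuity of $(y, T) \mapsto \prob[\zeta^y > T]$ is the main technical obstacle, because $\wt Y$ can explode and is only locally Lipschitz. I would handle it by combining monotonicity in $y$ from Proposition \ref{prop: mart S} (which makes $y \mapsto \prob[\zeta^y > T]$ monotone and so continuous in $y$ a.e., with one-sided limits controlled by a sandwich between monotone subsequences) with the right-continuity of $T \mapsto \prob[\zeta^y > T]$. Alternatively, on $\cbra{T < \zeta^y} = \cbra{H^y_T > 0}$ one can change measure via $\ud \qprob / \ud \prob = H^y_T / \expec[H^y_T]$: under $\qprob$, $Y$ evolves according to \eqref{eq: sde-tY} up to $\zeta^y$, and a stability statement analogous to Lemma \ref{lemma: cont L0} for $\wt Y$ (valid as long as one localizes below the explosion time) gives the desired continuity.

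\textbf{Interior regularity and the PDE.} Once $u$ is continuous on $\Real_+^3$, fix any bounded parabolic cylinder $\D = (a_1, b_1) \times (a_2, b_2) \times (0, T_0)$ compactly contained in $\Real_{++}^3$. On $\overline{\D}$ the coefficients of $\cL$ are $C^{1,\alpha}$ and uniformly bounded, and $b, \sigma$ are bounded away from zero, so $\cL$ is uniformly parabolic with H\"older coefficients. By classical parabolic theory (see e.g. \cite{friedman-parabolic}), the Cauchy-Dirichlet problem $\partial_T v = \cL v$ on $\D$ with continuous boundary data $u|_{\partial_p \D}$ (parabolic boundary) admits a unique classical solution $\bar u \in C^{2,2,1}(\D) \cap C(\overline{\D})$. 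Applying It\^o's formula to $\bar u(S^{x, y}_t, Y^y_t, T - t)$ up to the exit time $\tau_\D$ of $(S^{x, y}, Y^y, T - \cdot)$ from $\D$ produces a bounded martingale; optional sampling, together with $\bar u = u$ on $\partial_p \D$, the strong Markov property of $(S^{x, y}, Y^y)$, and the tower rule using \eqref{eq: mart u}, gives
\[
\bar u(x, y, T) = \expec\bra{\bar u(S^{x, y}_{\tau_\D}, Y^y_{\tau_\D}, T - \tau_\D)} = \expec\bra{u(S^{x, y}_{\tau_\D}, Y^y_{\tau_\D}, T - \tau_\D)} = \expec\bra{g(S^{x, y}_T)} = u(x, y, T).
\]
Hence $u \equiv \bar u$ on $\D$, so $u \in C^{2,2,1}(\D)$ and satisfies \eqref{eq: pricing eq} on $\D$. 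Since $\D$ is arbitrary, this gives the claim on all of $\Real_{++}^3$. The Scheff\'e/uniform integrability step in Part (i) is the only delicate point; the interior regularity is then a relatively standard parabolic + Feynman-Kac argument.
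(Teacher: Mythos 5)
Your split into (i) continuity and (ii) interior regularity + PDE is natural, and your part (ii) is essentially sound — it is the same verification argument (via the Cauchy–Dirichlet problem on a compactly contained cylinder, Itô on $\bar u$, optional sampling, and \eqref{eq: mart u}) that the paper also invokes by citing Theorem~2.7 of \cite{Janson-Tysk}. However, part (i) contains a genuine gap, and it is precisely the point you flag as ``the main technical obstacle'': you never actually establish that $(y,T) \mapsto \prob[\zeta^y > T]$ is continuous. Monotonicity of $y \mapsto \prob[\zeta^y > T]$ (from Proposition~\ref{prop: mart S}) only gives continuity off a countable set of $y$'s and does not rule out a jump at the particular $y$ of interest; ``sandwiching between monotone subsequences'' does not close this, because for $y_n \downarrow y$ one gets $\zeta^{y_n} \uparrow \zeta^* \leq \zeta^y$ a.s., and the equality $\zeta^* = \zeta^y$ is exactly what has to be proved (it is not automatic when $\wt\mu$ is only locally Lipschitz). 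The change-of-measure variant is in the same position: stability of $\wt Y$ localized strictly below the explosion time says nothing about the distribution function of $\zeta^y$ itself, which is a statement about behavior near explosion.

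The paper resolves this chicken-and-egg problem by reversing the order of your two steps. It first establishes \emph{interior} continuity and $C^{2,2,1}$ regularity of $u$ without any continuity up to the boundary: approximate $g$ by the bounded, continuous payoffs $g^m = g \wedge m$, get continuity of $u^m$ from Lemma~\ref{lemma: cont L0} and bounded convergence (no UI needed since $g^m$ is bounded), derive $\partial_T u^m = \cL u^m$ on interior cylinders by the verification argument, and then use \emph{interior Schauder estimates} for the uniformly bounded family $\{u^m\}$ to extract a subsequence converging in $C^{2,2,1}$ on compacts, so that the limit $u$ is smooth in $\Real_{++}^3$ and solves \eqref{eq: pricing eq}. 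Only then does it take $g(x) \equiv x$, so that $u(x,y,T) = x\,I(y,T)$ with $I(y,T) = \expec[H^y_T] = \prob[\zeta^y > T]$, and uses this interior regularity of $I$ (together with Fatou for $T=0$, the strong Markov property at the first hitting time of level $1$ for $y=0$, monotonicity, and Dini's theorem) to extend $I$ continuously to $\Real_+^2$. Your Scheff\'e/uniform integrability step — which is the paper's Step 3 — is correct, but it sits downstream of that boundary-continuity result for $I$, not upstream of the interior regularity. If you instead establish interior regularity first as the paper does, your part (i) then becomes a legitimate final step. As written, though, part (i) assumes what it is trying to prove in the special case $g(x)=x$.
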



\begin{proof}
We decompose the proof into three steps. First, we apply regularity results for nondegenerate parabolic PDEs to show that $u$ is continuous in the interior of $\Real_+^3$. Then assuming that $g(x) \equiv x$, we use probabilistic arguments to prove that $u$ extends continuously to the boundaries of $\Real_+^3$. Finally, we generalize the result to general payoff functions.

\subsubsection*{Step 1}
Consider a sequence of payoff functions $g^m := g \wedge m$, for $m \in \Natural$, and define $u^m (x, y, T) := \expec[g^m(S^{x,y}_T)]$ for $(x,y, T) \in \Real_+^3$. The monotone convergence theorem implies that $\lim_{m \rightarrow \infty} u^m(x, y, T) = u(x, y, T)$ for every $(x,y,T)\in \Real_+^3$. For each $u^m$, since $g^m$ is bounded and continuous, the continuity of $u^m$ follows from \eqref{eq: cont L0} and the bounded convergence theorem.

  Now let us consider a cylindrical domain $\D = A \times (t_1, t_2)$ such that its closure $\overline{\D}$ is a bounded subset of $\Real_{++}^3$. Since $\overline{\D}$ avoids the boundaries $x=0$ and $y=0$, it follows from a verification argument (see e.g. Theorem 2.7 in \cite{Janson-Tysk}) that $u^m$ satisfies a uniformly parabolic differential equation $u^m_T = \cL  u^m$ in $\D$. Note that the coefficients of these equations are the same for all $m$ and that $u^m$ are uniformly bounded above by $u$ which is bounded on $\overline{\D}$. It then follows from the \emph{interior Schauder estimate} (see e.g. Theorem~15 in \cite{friedman-parabolic} pp.80) that for any subsequence $\{u^{m'}\}$ of $\{u^m\}$, there exists a further subsequence $\{u^{m''}\}$ such that $\{u^{m''}\}$ uniformly converges to $u$ in any compact subdomain in $\D$. It then follows from the continuity of $u^{m''}$ and the uniform convergence that $u\in C(\D)$. Therefore, $u \in C(\Real_{++}^3)$ since $\D$ is arbitrarily chosen. On the other hand, the Schauder interior estimate also yields that $u$ satisfies \eqref{eq: pricing eq} and $u \in C^{2,2,1}(\Real_{++}^3)$.

\subsubsection*{Step 2} Consider the special case of $g$ satisfying $g(x) \equiv x$; in this case, $u$ satisfies $u(x, y, T) = x\, \expec\bra{H^y_T}$ for $(x, y, T) \in \Real_+^3$. We are going to show that $u$ extends continuously to the boundaries $x=0$, $y=0$, and $T=0$. (If $H^{y}$ is a martingale for $y \in \Real_+$, this step is entirely trivial. Indeed $x \expec\bra{H^{y}_{T}} = x$ clearly indicates that $u$ is continuous on $\Real^3_+$.)

Take an $\Real_+$-valued  sequence $(x_k)_{k \in \Natural}$ such that $\downarrow \lim_{k \to \infty} x_k = 0$. It follows from the supermartingale property of $H^y$ that $|u(x_k, y, T) - u(0, y, T)| = x_k \, \expec\bra{H^y_T} \leq x_k$ for all $(y, T) \in \Real_+^2$. Therefore, $u(x_k, y, T)$ converges uniformly in $(y, T)$ to $u(0, y, T)$. This ensures that $u$ extends continuously to the boundary $x=0$.

 Let us prove the continuity at $T=0$. Given any sequence $\Real_+^3 \ni (x_k, y_k, T_k) \rightarrow (x,y,0)$, it follows from Fatou's lemma and \eqref{eq: cont L0} that $\liminf_{k\rightarrow \infty} u(x_k, y_k, T_k) \geq x\, \expec[\liminf_{k\rightarrow \infty} H^{y_k}_{T_k}] =x$. On the other hand, note that since $\expec[H^{y_k}_{T_k}] \leq 1$ holds for all $k$, $\limsup_{k\rightarrow \infty} u(x_k, y_k, T_k) \leq x$. We then conclude that $u$ extends continuously to $T=0$.

 Since $\lim_{k \rightarrow \infty} u(x_k, y, T) = u(x,y,T)$ uniformly in $(y,T)$, in order to show that $u$ extends continuously to $y=0$, it suffices to show that for any $\Real_+$-valued sequence $\{y_{\ell}\} \downarrow 0$, $\expec\bra{H^{y_\ell}_T}$ converges to $\expec\bra{H^0_T}$ uniformly, and that $\Real_+ \ni T \mapsto \expec\bra{H^0_T}$ is continuous.

 Let us prove the continuity of $\Real_+ \ni T \mapsto \expec\bra{H^0_T}$ first. Recall $\expec\bra{H^0_T} = \prob \bra{\zeta^0 > T}$ from \eqref{eq: expec S}. It is clear that $\Real_+ \ni T \mapsto \prob \bra{\zeta^0 > T}$ is right continuous. In order to show the left continuity of this map, it suffices to show $\prob \bra{\zeta^0 = T} =0$ for any $T \in \Real_+$. To this end, set $\tau= \inf\set{t\geq 0 \such Y^0_t = 1}$. It follows from the strong Markov property that
\begin{equation} \label{eq: H0 cont}
\prob \bra{\zeta^0 = T} = \int_0^{T} \prob \bra{\zeta^1 = T - s} \prob \bra{\tau \in d s}.
\end{equation}
We have shown that $T\mapsto \expec[H_T^1]$ is continuous at $T=0$, moreover we also conclude from Step 1 that the last map is continuous at $T>0$. Therefore,
$\Real_+ \ni T \mapsto \expec\bra{H^{1}_T}$ is continuous, which implies that $\prob \bra{\zeta^1 =t} =0$ for any $t \in \Real_+$. Combining the last fact with \eqref{eq: H0 cont}, we obtain that $\prob \bra{\zeta^0 = T}=0$, which confirms the left continuity of $\Real_+ \ni T \mapsto \expec\bra{H^0_T}$.

 Now we prove $\lim_{\ell\rightarrow \infty} \expec[H^{y_\ell}_T] = \expec[H^0_T]$ for fixed $T$. On one hand, it follows from Fatou's lemma that $\expec\bra{H^0_T} \leq \liminf_{\ell \rightarrow \infty} \expec\bra{H^{y_\ell}_T}$. On the other hand, it follows from Proposition~\ref{prop: mart S} that $\cbra{\expec \bra{H^{y_\ell}_T}}_{\ell \in \Natural}$ is a nondecreasing sequence. This implies that $\limsup_{\ell\rightarrow \infty} \expec[H^{y_\ell}_T] \leq \expec[H^0_T]$. Therefore we have shown $\uparrow \lim_{\ell \rightarrow \infty} \expec[H^{y_\ell}_T] = \expec[H^0_T]$.

To show that the convergence $\uparrow \lim_{\ell \rightarrow \infty} \expec[H^{y_\ell}_T] = \expec[H^0_T]$ is uniform, recall that
$\Real_+ \ni T \mapsto \expec\bra{H^0_T}$ is continuous. On the other hand, $\Real_+ \ni T \mapsto \expec[H^y_T]$ is continuous for $y>0$. 
It then follows from Dini's theorem that the convergence of $\cbra{\expec[H^{y_{\ell}}_T]}_{\ell\in \Natural}$ is uniform in $T$.

\subsubsection*{Step 3} The results of the previous two steps imply that $\Real_+^3 \ni (x,y,T) \mapsto \expec[S^{x,y}_T]$ is continuous on $\Real_+^3$. Hence, for any sequence $\{(x_n, y_n, T_n)\}_{n\in \Natural}$ converging to $(x,y,T)$ with $(x_n, y_n, T_n)$ inside a bounded neighborhood of $(x,y,T)$ for $n\in \Natural$, $\cbra{S_{T_n}^{x_n, y_n}}_{n\in \Natural}$ is a uniformly integrable family. Therefore, for a nonnegative payoff $g$ which is at most linear growth, $\cbra{g(S_{T_n}^{x_n,y_n})}_{n\in \Natural}$ is bounded from above by a uniform integrable family $\cbra{M \pare{1+  S_{T_n}^{x_n,y_n}}}_{n\in \Natural}$, which  along  with \eqref{eq: cont L0} implies that $u\in C(\Real^3_+)$.
\end{proof}

\subsection{Proof of Proposition~\ref{prop: S strict local mart}}
Let $y \in \Real_{++}$. When \eqref{feller test} is violated, it follows from Feller's test that $\prob \bra{\zeta^y = \infty} = 1$. Then, \eqref{eq: expec S} implies that $H^{y}_{\cdot \wedge T}$ is a martingale for any $T \geq 0$. This confirms the implication $(1) \implies (2)$.

 The proof of the implication $(2) \implies (1)$ is motivated by the proof of Proposition 3 in \cite{DFern-Kar09}. Let us define $I(y, T) := \expec [H^y_T] = \prob \bra{\zeta^y > T}$ for $(y,T) \in \Real_+^2$. Since $\expec [S^{x, y}_T] = x \, I(y,T)$, it follows from Lemma~\ref{thm: u cont} (choosing $g$ such that $g(x) \equiv x$) that $I\in C(\Real_+^2) \cap C^{2,1}(\Real_{++}^2)$ and that $I$ satisfies
 \begin{equation}\label{eq: pde I}
 \begin{split}
  & \partial_T I - \frac12 \sigma^2(y) \,\partial^2_{yy} I - (\mu(y) + \rho b(y) \sigma(y)) \,\partial_y I = 0, \quad  (y,T)\in \Real_{++}^2,\\
  & I(y, 0) =1, \quad y\in \Real_+.
 \end{split}
 \end{equation}

 When \eqref{feller test} is satisfied, it follows from Feller's test for explosions that $\lim_{T\rightarrow \infty} I(y, T) < 1$ for all $y \in \Real_{++}$. Pick sufficiently large $T^{*}$ such that $I(1, T^{*}) < 1$. We claim that
 \begin{equation}\label{eq: I<1 all y}
  I(y, T^{*}) <1 \quad \text{ for all } y \in \Real_{++}.
 \end{equation}
 We shall prove this by contradiction. Suppose that there exists  $y^* \in \Real_{++}$ such that $I(y^*, T^{*}) =1$. For any $y>0$, consider an open domain $A$ which contains both $1$ and $y^*$ and whose closure $\overline{A}$ is a compact subset of $\Real_{++}$. Then $I$ attains its maximum at $(y^*, T^{*})$ over the cylindrical domain $A \times [0, T^{*}+1]$. Note that $I$ satisfies the uniformly parabolic equation \eqref{eq: pde I} in $A \times (0, T^{*}+1)$. Then the maximum principle (see e.g.  \cite[Chapter 2]{friedman-parabolic}) implies that $I(y, T) =1$ for any $0\leq T\leq T^{*}$ and $y\in A$. Therefore $I(1, T^*) =1$, which clearly contradicts with the choice of $T^{*}$.

 Now define $\mathcal{S}(T) = \set{y \in \Real_{++} : I(y, T) =1}$ and
 \begin{equation}\label{def: t*}
  T_{*} := \sup\set{T \geq 0 : \mathcal{S}(T) \neq \emptyset},
 \end{equation}
with the convention that $T_{*} = \infty$ when the above set is empty. In fact, \eqref{eq: I<1 all y} implies $T_{*} <\infty$. We shall show $T_{*} = 0$ in what follows.

 Suppose $T_{*} >0$. Then for any $\delta \in (0, T_{*}/2)$, there exists a $y \in \Real_{++}$ such that $I(y, T_{*} - \delta) =1$. Using the maximum principle as we did above, we obtain that
 \begin{equation}\label{eq: I=1 all y t}
  I(y, T) = 1, \quad \text{ for any } 0\leq T\leq T_{*} - \delta \text{ and } y \in \Real_+.
 \end{equation}
(Note that  $I(0, T)=1$ follows because $I(\cdot,T)$ is nonincreasing for fixed $T \in \Real_+$  --- see Proposition \ref{prop: mart S}.) Now, from the definition of $I$ and the Markov property, we have $\expec \bra{H_T^y \, |\, \F_t} = I(Y^y_t, T-t)$ for all $(y, T) \in \Real_{++}^2$. When $0\leq t\leq T_* - \delta$ and $0\leq T-t\leq T_* - \delta$, applying \eqref{eq: I=1 all y t} to the previous identity, we obtain $I(y, T) = 1$ for every $T\in [0, 2(T_{*}-\delta)]$ and $y \in \Real_{++}$. Note that $2(T_{*} - \delta) > T_{*}$, this contradicts with the definition of $T_{*}$.
 Therefore, $T_{*} = 0$, which implies that $I(y,T) < 1$ for any $(y, T) \in \Real_{++}^2$. \qed

\section{The Notion of Stochastic Solutions}\label{sec: stoch soln}
A notion of stochastic solutions to \eqref{eq: pricing eq} is introduced in this section. Its definition is motivated by the definition in \cite{Stroock-Varadhan} pp. 672, Definition~3.1 in \cite{Hsu}, and Definition~2.2 in \cite{Janson-Tysk}. 


\begin{defn}\label{defn: stochastic solution}
Consider a continuous function $v: \Real_+^3 \rightarrow \Real_+$. For $(x, y, T) \in \Real_{++}^3$, define $V^{x, y, T} = (V^{x, y, T}_t)_{t \in [0, T]}$ via $V^{x, y, T}_t = v(S^{x,y}_t, Y^{y}_t, T-t)$ for $t \in [0, T]$. Then, $v$ is a \emph{stochastic solution} of \eqref{eq: pricing eq}, if for each $(x,y,T) \in \Real_{++}^3$:
 \begin{enumerate}
  \item[(i)] $V^{x, y, T}$ is a local martingale on $[0, T]$,
  \item[(ii)] $v\pare{x, y, 0} = g\pare{x}$.
 \end{enumerate}
\end{defn}


\begin{prop}\label{prop: existence}
 The value function $u$, defined in \eqref{eq: def u}, is a stochastic solution dominated by $h$. In fact, $u$ is the smallest stochastic solution.
\end{prop}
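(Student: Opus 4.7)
\textbf{Proof proposal for Proposition \ref{prop: existence}.} The plan is to first verify that $u$ itself qualifies as a stochastic solution satisfying the domination $u\leq h$, and then to prove minimality by the standard nonnegative-supermartingale argument. Most of the pieces have already been assembled in the preceding section, so the work is mainly to package them.

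First, I would verify that $u$ is a stochastic solution. Continuity of $u$ on $\Real_+^3$ is exactly Lemma~\ref{thm: u cont}, so $u$ is an admissible candidate function for Definition~\ref{defn: stochastic solution}. For each $(x,y,T)\in\Real_{++}^3$, the process $U^{x,y,T}_t=u(S^{x,y}_t,Y^y_t,T-t)$ was already identified in \eqref{eq: mart u} as the conditional expectation $\expec[g(S^{x,y}_T)\mid\F_t]$, and since $\expec[g(S^{x,y}_T)]\leq h(x)<\infty$ by \eqref{eq: u growth}, this is a genuine (uniformly integrable) martingale on $[0,T]$, hence a fortiori a local martingale; condition (i) is verified. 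Condition (ii) is immediate from the definition of $u$: $u(x,y,0)=\expec[g(S^{x,y}_0)]=g(x)$. Domination by $h$ is the chain of inequalities already recorded in \eqref{eq: u growth}, using Jensen's inequality for the concave function $h$ together with the fact that $S^{x,y}$ is a nonnegative local martingale, hence a supermartingale with $\expec[S^{x,y}_t]\leq x$.

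Next, I would argue minimality. Let $v$ be any stochastic solution in the sense of Definition~\ref{defn: stochastic solution}; in particular $v\geq 0$ on $\Real_+^3$. Fix $(x,y,T)\in\Real_{++}^3$ and consider the associated process $V^{x,y,T}$. By hypothesis it is a local martingale on $[0,T]$; since it is the composition of a continuous nonnegative function with the continuous process $(S^{x,y},Y^y)$, it is itself a nonnegative process. A nonnegative local martingale is a supermartingale (by Fatou's lemma applied to a localizing sequence), so
\begin{equation*}
v(x,y,T)=V^{x,y,T}_0\;\geq\;\expec\bigl[V^{x,y,T}_T\bigr]=\expec\bigl[v\bigl(S^{x,y}_T,Y^y_T,0\bigr)\bigr]=\expec\bigl[g\bigl(S^{x,y}_T\bigr)\bigr]=u(x,y,T),
\end{equation*}
where in the second-to-last equality I use that, by the continuity of $v$ on $\Real_+^3$ together with condition (ii) of Definition~\ref{defn: stochastic solution} extended from $\Real_{++}^2$ to $\Real_+^2$ by continuity, one has $v(\xi,\eta,0)=g(\xi)$ for every $(\xi,\eta)\in\Real_+^2$ (this matters because $Y^y_T$ may well hit $0$). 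This proves $v\geq u$ pointwise on $\Real_{++}^3$, and continuity of both $v$ and $u$ on $\Real_+^3$ then extends the inequality to the full orthant.

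There is no serious obstacle here: the only subtle point is ensuring that $v(x,0,T)$ is available to be evaluated at $Y^y_T=0$, which is handled by the continuous extension of condition (ii) from $\Real_{++}^2$ to $\Real_+^2$. Everything else reduces to the two standard facts (nonnegative local martingales are supermartingales; conditional expectations produce martingales) combined with the already-proven continuity of $u$.
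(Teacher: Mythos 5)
Your proposal is correct and follows essentially the same route as the paper: continuity and the bound $u\leq h$ from Lemma~\ref{thm: u cont} and \eqref{eq: u growth}, the martingale identity \eqref{eq: mart u} for condition (i), and the nonnegative-local-martingale-is-supermartingale argument with a continuity extension to $\Real_+^3$ for minimality. Your explicit remark that condition (ii) of Definition~\ref{defn: stochastic solution} must be extended by continuity to cover the event $\{Y^y_T=0\}$ is a point the paper leaves implicit, but it does not change the structure of the argument.
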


\begin{proof}
We have already shown in Lemma~\ref{thm: u cont} that $u\in C(\Real_+^3)$ and $u\leq h$ in \eqref{eq: u growth}. Recall that $U^{x, y, T} = (U^{x, y, T}_t)_{t \in [0, T]}$ with $U^{x, y, T}_t =u(S^{x,y}_t, Y^{y}_t, T-t)$ for $t \in [0, T]$. In \eqref{eq: mart u} we established that $U^{x, y, T}$ is a martingale on $[0,T]$. Therefore, $u$ is a stochastic solution.


To show the second statement, we take another stochastic solution $v$ and let $V^{x, y, T}$ be as in Definition \ref{defn: stochastic solution}. Since $V^{x, y, T}$ is a nonnegative local martingale, hence a supermartingale, we have
\[
v(x,y,T) = V^{x, y, T}_0 \geq \expec \bra{V^{x, y, T}_T} = \expec \bra{v\pare{S^{x, y}_{T}, Y^y_{T}, 0}} = \expec \bra{g\pare{S^{x, y}_{T}}} = u(x,y,T).
\]
Therefore $v\geq u$ on $\Real_{++}^3$. Thanks to the continuity of $v$ and $u$ on $\Real_+^3$, the last inequality then holds on $\Real_+^3$.
\end{proof}

The uniqueness of stochastic solutions for \eqref{eq: pricing eq} ties naturally to the martingale property of the  asset-price process. This result is the main accomplishment of this section which will be presented in two propositions. But before we will need to state the following technical lemma.

Before proceed, let us prepare the following result.

\begin{lem}\label{lem: h growth}
 $\eta = \limsup_{x\rightarrow \infty} g(x)/x = \limsup_{x\rightarrow \infty} h(x)/x= \downarrow\lim_{x\rightarrow \infty} h'(x)$.
\end{lem}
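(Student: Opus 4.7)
The plan is to prove the three equalities in a cycle: $\eta \leq \limsup_{x\to\infty} h(x)/x$ (trivial), $\limsup_{x\to\infty} h(x)/x = \lim_{x\to\infty} h'(x)$ (elementary calculus for concave functions), and finally $\lim_{x\to\infty} h'(x) \leq \eta$ (the crucial step, exploiting minimality of $h$).

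First, I would note that because $h$ is concave and nondecreasing on $\Real_+$, its derivative $h'$ exists Lebesgue a.e., is nonincreasing, and nonnegative, so $\eta' := \downarrow\lim_{x \to \infty} h'(x) \in [0, \infty)$ is well defined. The inequality $\eta \leq \limsup_{x \to \infty} h(x)/x$ is immediate from $g \leq h$. To identify $\limsup_{x \to \infty} h(x)/x$ with $\eta'$, I would write $h(x) = h(0) + \int_0^x h'(s)\,ds$ for $x \in \Real_{++}$ (which holds because $h$ is concave, hence absolutely continuous on every compact subinterval of $\Real_{++}$, and the boundary behavior at $0$ follows from continuity of $h$ together with $h \leq M(1+\cdot)$). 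Dividing by $x$ and sending $x\to\infty$ gives $\lim_{x\to\infty} h(x)/x = \eta'$ by Ces\`aro's theorem applied to the monotone integrand $h'$, so the limit actually exists and equals $\eta'$.

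The remaining and main step is to show $\eta' \leq \eta$, which I would prove by contradiction. Suppose $\eta' > \eta$ and pick $\beta \in (\eta, \eta')$. By definition of $\eta$, there exists $x_0 \in \Real_{++}$ such that $g(x) \leq \beta x$ for all $x \geq x_0$. On the compact interval $[0, x_0]$, the continuous function $g$ is bounded by some constant $K \geq 0$; fix any $a \geq K$ and define
\[
\wt{h}(x) := \min\set{h(x),\, a + \beta x}, \quad x \in \Real_+.
\]
Then $\wt h$ is nonnegative (both arguments are), nondecreasing (minimum of nondecreasing functions), and concave (minimum of concave functions). It dominates $g$: for $x \leq x_0$ we have $a + \beta x \geq a \geq K \geq g(x)$, and $h(x) \geq g(x)$; for $x \geq x_0$ we have $a + \beta x \geq \beta x \geq g(x)$, and again $h(x) \geq g(x)$. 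Hence $\wt{h}$ is a concave, nonnegative, nondecreasing majorant of $g$. On the other hand, since $h(x)/x \to \eta' > \beta$, we have $h(x) - \beta x \to \infty$, so $h(x) > a + \beta x$ for all sufficiently large $x$; at any such $x$, $\wt h(x) < h(x)$. This contradicts the fact that $h$ is the \emph{smallest} concave, nonnegative, nondecreasing majorant of $g$. Therefore $\eta' \leq \eta$, and combining the three inequalities yields the desired chain of equalities.

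The main obstacle is step three: one has to produce a concrete competing majorant $\wt h$ that is strictly smaller than $h$ somewhere, and the construction must simultaneously preserve concavity, monotonicity, nonnegativity, and the domination of $g$. The affine cap $a + \beta x$ with $\beta \in (\eta, \eta')$ is natural once one notices that concavity is preserved under pointwise minimum, and the bound on $g$ on the compact set $[0,x_0]$ lets one choose the intercept $a$ large enough to handle the region where $g$ may exceed $\beta x$.
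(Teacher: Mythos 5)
Your proof is correct and reaches the same conclusion as the paper's, but the organization and the key construction differ in worthwhile ways. The paper first establishes $\downarrow\lim_{x\to\infty} h'(x) = \eta$ by a two-sided contradiction argument (separately ruling out $\downarrow\lim h' < \eta$ and $\downarrow\lim h' > \eta$) and only then derives $\limsup_{x\to\infty} h(x)/x = \eta$ as a corollary. You instead begin by proving $\limsup_{x\to\infty} h(x)/x = \downarrow\lim_{x\to\infty} h'(x)$ via the integral representation $h(x) = h(0) + \int_0^x h'(s)\,ds$ and a Ces\`aro argument; this automatically handles one of the paper's two contradiction cases, leaving only a single inequality $\downarrow\lim h' \leq \eta$ to establish. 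The more substantive difference is in the construction of the competing majorant $\wt{h}$: the paper glues $h$ on $[0, x_0)$ to an affine ray starting at $(x_0, h(x_0))$ with slope $(\xi+\eta)/2$, which requires one to verify that the glued function remains concave (the left derivative at $x_0$ must exceed the right derivative) and still dominates $g$. Your choice $\wt{h} = \min\{h, a + \beta\,\cdot\}$ is cleaner: concavity, monotonicity, and nonnegativity are all immediate from closure of these properties under pointwise minimum, and the only thing to check is that the affine cap dominates $g$, which you handle by splitting at $x_0$ and taking $a$ large enough to cover $g$ on $[0, x_0]$. Both routes ultimately exploit the same idea of capping $h$ above by an affine function of slope strictly between $\eta$ and $\downarrow\lim h'$ to contradict minimality, so the gain is in economy of verification rather than in a new conceptual ingredient.
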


\begin{proof}
 Since $h$ dominates $g$,
 \begin{equation}\label{eq: h(x)/x lb}
 \limsup_{x\to \infty} \frac{h(x)}{x} \geq \limsup_{x\to \infty} \frac{g(x)}{x} = \eta.
 \end{equation}
 If $\downarrow \lim_{x\to \infty} h'(x)<\eta$, there exist $x_0$ and $\epsilon>0$ such that $h'(x)\leq \eta-\epsilon$ for $x\geq x_0$. Hence $h(x) \leq (\eta- \epsilon)(x-x_0) + h(x_0)$, for $x\geq x_0$, which contradicts \eqref{eq: h(x)/x lb}. On the other hand, if $\downarrow \lim_{x\to \infty} h'(x) =\xi > \eta$, there exists $x_0$, such that $h(x_0)\geq \frac{\xi+\eta}{2} x_0$ and $g(x) \leq \frac{\xi+ \eta}{2} x$ for $x>x_0$. Since $h'(x)\geq \xi$ on $\Real_+$, $h(x)>h(x_0)-\frac{\xi+ \eta}{2} x_0+\frac{\xi+ \eta}{2} x$, for $x>x_0$. Let us consider
 \[
  \wt{h}(x) := \left\{\begin{array}{ll}h(x), & x< x_0;\\ h(x_0)-\frac{\xi+ \eta}{2} x_0+\frac{\xi + \eta}{2} x, & x\geq x_0.\end{array}\right.
 \]
 It is easy to check that $\wt{h}$ is another nonnegative, nondecreasing, and concave function that dominates $g$. But $\wt{h}<h$, which contradicts with the definition of $h$. Therefore, $\downarrow \lim_{x\to \infty} h'(x) =\eta$.

To show $\limsup_{x\to \infty} h(x)/x = \eta$, observe  that
\[
\limsup_{x\to \infty} \frac{h(x)}{x} = \limsup_{x\to \infty} \frac{h(x)- h(0)}{x} \geq \lim_{x\rightarrow \infty} h'(x) = \eta,
\]
since $h$ is concave. On the other hand, for any $\epsilon >0$, there exists $x_0$, such that $h'(x) \leq \eta+ \epsilon$ for $x\geq x_0$. Therefore $h(x) \leq (\eta+\epsilon)(x-x_0) + h(x_0)$ for $x\geq x_0$, which implies $\limsup_{x\to \infty} h(x)/ x \leq \eta+ \epsilon$. Hence $\limsup_{x\to \infty} h(x) /x \leq \eta$ since the choice of $\epsilon$ is arbitrary.
\end{proof}

\begin{prop}\label{prop: uniqueness}
Suppose that $g$ is of linear growth, i.e., $\eta = \limsup_{x\rightarrow \infty} g(x) / x>0$. Then, there exists a unique stochastic solution in the class of functions which are dominated by $h$ if and only if the asset-price process is a martingale. In that case, $u$ is this unique stochastic solution.
\end{prop}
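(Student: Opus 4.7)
The strategy is to prove both implications separately, with the ``only if'' direction handled by the contrapositive. Sufficiency will rest on the concavity bound $h(x) \leq h(0) + h'(0) x$; necessity will rest on an explicit construction of an alternative solution driven by $\eta = \limsup_{x\to\infty} g(x)/x = \lim_{x\to\infty} h'(x)$, as identified in Lemma~\ref{lem: h growth}.

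For sufficiency, suppose $S$ is a true martingale and let $v \leq h$ be any stochastic solution. The process $V^{x,y,T}_t := v(S^{x,y}_t, Y^y_t, T-t)$ is a nonnegative local martingale, hence a supermartingale, which as in the proof of Proposition~\ref{prop: existence} immediately yields $v \geq u$. For the reverse inequality I would invoke the bound $V^{x,y,T}_t \leq h(0) + h'(0) S^{x,y}_t$; since $S^{x,y}$ is a true martingale on $[0,T]$, the dominating family $\{h(0) + h'(0) S^{x,y}_t\}_{t \in [0,T]}$ is uniformly integrable, and so is $\{V^{x,y,T}_t\}$. A uniformly integrable local martingale is a martingale, so $v(x,y,T) = \expec[V^{x,y,T}_T] = \expec[g(S^{x,y}_T)] = u(x,y,T)$.

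For necessity (contrapositive), assume $S$ is a strict local martingale and set
\[
\tilde v(x,y,T) := u(x,y,T) + \eta\pare{x - \expec[S^{x,y}_T]}, \quad (x,y,T) \in \Real_+^3.
\]
Continuity of $\tilde v$ on $\Real_+^3$ follows from Lemma~\ref{thm: u cont} applied to $g$ and to the linear payoff, while $\tilde v(x,y,0) = g(x)$ is immediate. The stochastic-solution property comes from the decomposition
\[
\tilde V^{x,y,T}_t = U^{x,y,T}_t + \eta S^{x,y}_t - \eta\,\expec[S^{x,y}_T \,\big|\, \F_t],
\]
which exhibits $\tilde V^{x,y,T}$ as a sum of a martingale, a local martingale, and a closed martingale (the last using the Markov property of $(S,Y)$). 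By Proposition~\ref{prop: S strict local mart}, $\expec[H^y_T] < 1$ on $\Real_{++}^2$, so $\tilde v - u = \eta x (1 - \expec[H^y_T]) > 0$ on $\Real_{++}^3$, giving the second stochastic solution and refuting uniqueness.

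The main obstacle is verifying $\tilde v \leq h$. Using Jensen's inequality and $\expec[S^{x,y}_T] \leq x$, one has $u(x,y,T) \leq h\pare{\expec[S^{x,y}_T]}$, so setting $\xi := \expec[S^{x,y}_T] \leq x$ reduces the task to the pointwise inequality $h(\xi) + \eta(x-\xi) \leq h(x)$. This is precisely where Lemma~\ref{lem: h growth} is indispensable: concavity combined with $\lim_{x\to\infty} h'(x) = \eta$ forces $h'(\xi) \geq \eta$ for every $\xi \in \Real_+$, and integrating gives $h(x) - h(\xi) \geq \eta(x-\xi)$, as required. Without the sharp identification of $\eta$ as the infimal slope of $h$, the candidate $\tilde v$ would overshoot the growth constraint and the construction would collapse.
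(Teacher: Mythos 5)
Your necessity construction is exactly the paper's: the alternative solution is $\tilde v = u + \eta\,\delta$ with $\delta(x,y,T) = x - \expec[S^{x,y}_T]$, and your verification that $\tilde v \leq h$ via the inequality $h(x) - h(\xi) \geq \eta(x-\xi)$ is the same fact the paper expresses by noting that $f := h - \eta\,\mathrm{id}$ is nondecreasing and concave; the continuity, initial-condition and strictness ($\delta > 0$ on $\Real_{++}^3$ via Proposition~\ref{prop: S strict local mart}) arguments all match.

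The sufficiency direction follows the paper's strategy as well (uniform integrability of $V^{x,y,T}$ coming from the true martingale property of $S$), but the specific bound you invoke is not valid in general. You estimate $V^{x,y,T}_t \leq h(0) + h'(0)\,S^{x,y}_t$ from the tangent line of the concave function $h$ at $0$, but $h'(0+)$ may equal $+\infty$: take, say, $g(x) = \min\set{1 + \sqrt{x},\,2}$, for which $g = h$, $h'(0+) = \infty$, yet $g$ satisfies Assumption~\ref{ass: payoff}. The correct majorant is the one the linear-growth hypothesis actually provides, $h(x) \leq M(1+x)$; this gives $V^{x,y,T}_{\sigma_n\wedge T} \leq M\pare{1 + x H^y_{\sigma_n \wedge T}}$ along a localizing sequence $\set{\sigma_n}_{n\in\Natural}$, and the martingale property of $H^y$ then makes this family uniformly integrable, closing the argument $v = u$ exactly as in the paper. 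The rest of your sufficiency argument (supermartingale inequality for $v \geq u$, UI local martingale is a martingale) is fine.
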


\begin{proof}
 Let us define a function $\delta: \Real_+^3 \mapsto \Real$ via $\delta (x, y, T) := x- \expec \bra{S^{x, y}_T} = x- x\,\expec \bra{H^y_T}$ for $(x, y, T) \in \Real_+^3$. Since $H^y$ is a nonnegative local martingale for $y \in \Real_+$, $\delta$ is nonnegative. Also,
\[
\delta \pare{S^{x, y}_{t}, Y^y_{t}, T- t} = S^{x, y}_{t} - \expec \bra{S^{x, y}_T \such \F_{t} }
\]
holds for all $(x, y, T) \in \Real_+^3$ and $t \in [0, T]$, in view of the Markov property. It follows that $(\delta \pare{S^{x, y}_{t}, Y^y_{t}, T- t})_{t \in [0,T]}$ is a local martingale for all $(x, y, T) \in \Real_+^3$.

Now, Lemma~\ref{lem: h growth} implies that $f(x):=h(x) - \eta x$ is a nondecreasing concave function. Hence
\begin{equation}\label{eq: u+eta delta}
\begin{split}
 (u+ \eta \delta) (x,y,T) &= \expec\bra{g(S^{x,y}_T) - \eta S^{x,y}_T} + \eta x \leq \expec\bra{f(S_T^{x,y})} + \eta x\\
 & \leq f\pare{\expec[S^{x,y}_T]} + \eta x \leq f(x) + \eta x = h(x), \quad \text{ for any } (x,y,T)\in \Real_+^3.
\end{split}
\end{equation}
Therefore, both $u$ and $u+ \eta \delta$ are stochastic solutions dominated by $h$. Suppose that the stochastic solution is unique. Then the  asset price process must be a martingale. Otherwise, Proposition~\ref{prop: S strict local mart} implies that $S^{x,y}_{\cdot \wedge T}$ is a strict local martingale for any $(x,y,T)\in \Real_{++}^3$; hence, $\delta>0$ on $\Real_{++}^3$ and $u$ and $u+\eta \delta$ are two different stochastic solutions dominated by $h$.

Assume that the  asset-price process is a martingale and take a stochastic solution $v$ which is dominated by $h$ on $\Real^3_+$. The uniqueness follows once we show $v \equiv u$. We shall establish below that $v = u$ on $\Real_{++}^3$. The last identity can be extended to $\Real_{+}^3$ thanks to the continuity of $v$ and $u$.

Fix $(x,y,T)\in \Real_{++}^3$, and take a localizing sequence $\set{\sigma_n}_{n\in \Natural}$ of the local martingale $V^{x,y, T}$. Then,
 \[
  v(x,y,T) = V^{x,y, T}_0 = \expec\bra{V^{x,y,T}_{\sigma_n \wedge T}} \quad \text{ for all } n\in \Natural.
 \]
On the other hand, the linear growth constraint $h(x)\leq M(1+x)$ on $\Real_+$ implies that
\[
 V^{x,y,T}_{\sigma_n \wedge T} \leq M  \pare{1+ x H^y_{\sigma_n \wedge T}}.
\]
Since $H^y$ is a martingale, $\set{H^y_{\sigma_n\wedge T}}_{n\in \Natural}$ is a uniformly integrable family. Therefore, $\set{V^{x,y,T}_{\sigma_n \wedge T}}_{n\in \Natural}$ is a uniformly integrable family, which along with the continuity of $v$ implies that
 \[
 \begin{split}
  v(x,y,T) = \lim_{n\rightarrow \infty} \expec \bra{V^{x,y,T}_{\sigma_n \wedge T}}=  \expec \bra{\lim_{n\rightarrow \infty} V^{x,y,T}_{\sigma_n \wedge T}} = \expec \bra{V^{x,y,T}_{T}} = \expec \bra{g \pare{S^{x, y}_{T}}} = u(x,y,T).
 \end{split}
 \]
\end{proof}

When the payoff $g$ is of strictly sublinear growth, the uniqueness of stochastic solutions always holds, no matter whether the  asset-price process is a martingale or not.

\begin{prop}\label{prop: strtictly sublinear}
 When $g$ be of strictly sublinear growth, i.e., $\limsup_{x\rightarrow \infty} g(x)/x=0$, then $u$ is the unique stochastic solution dominated by $h$.
\end{prop}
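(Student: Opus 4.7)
The plan is to establish uniform integrability of the local martingale $V^{x,y,T}$ associated with an arbitrary stochastic solution $v$ dominated by $h$, and then conclude by the martingale identity that $v \equiv u$. The key enabling observation is that, by Lemma~\ref{lem: h growth}, strictly sublinear growth of $g$ forces $h$ to be strictly sublinear as well, in the strong sense that for every $\epsilon > 0$ there is $M_\epsilon$ with $h(x) \leq \epsilon x$ for $x > M_\epsilon$ (since $h$ is nondecreasing). This kind of growth control, combined with an $L^1$-bound on $\{S^{x,y}_{\tau}\}$ coming from the supermartingale property, is exactly what is needed to upgrade local-martingale identities to true martingale identities.

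First, I would fix $(x,y,T) \in \Real_{++}^3$, let $v$ be any stochastic solution dominated by $h$, and let $(\sigma_n)_{n \in \Natural}$ be a localizing sequence for the nonnegative local martingale $V^{x,y,T}$ of Definition~\ref{defn: stochastic solution}. From the local martingale identity I have $v(x,y,T) = \expec[V^{x,y,T}_{\sigma_n \wedge T}]$ for every $n$, and from $v \leq h$ together with monotonicity I have the pointwise bound $V^{x,y,T}_{\sigma_n \wedge T} \leq h(S^{x,y}_{\sigma_n \wedge T})$. Because $S^{x,y} = x H^y$ is a nonnegative local martingale, hence a supermartingale with $S^{x,y}_0 = x$, optional stopping yields $\sup_n \expec[S^{x,y}_{\sigma_n \wedge T}] \leq x$.

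The main step is then a clean uniform integrability argument. Given $\epsilon > 0$, choose $M_\epsilon$ with $h(z) \leq \epsilon z$ for $z > M_\epsilon$. For any level $K \geq h(M_\epsilon)$, the event $\{h(S^{x,y}_{\sigma_n \wedge T}) > K\}$ is contained in $\{S^{x,y}_{\sigma_n \wedge T} > M_\epsilon\}$ (again by monotonicity of $h$), and on this event $h(S^{x,y}_{\sigma_n \wedge T}) \leq \epsilon S^{x,y}_{\sigma_n \wedge T}$, so
\[
\expec\bra{h(S^{x,y}_{\sigma_n \wedge T}) \indic_{\{h(S^{x,y}_{\sigma_n \wedge T}) > K\}}} \leq \epsilon \expec\bra{S^{x,y}_{\sigma_n \wedge T}} \leq \epsilon x.
\]
Since $\epsilon > 0$ was arbitrary, $\{h(S^{x,y}_{\sigma_n \wedge T})\}_{n \in \Natural}$ is uniformly integrable, and so is the dominated family $\{V^{x,y,T}_{\sigma_n \wedge T}\}_{n \in \Natural}$.

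Finally, the continuity of $v$ on $\Real_+^3$ and continuity of paths give $V^{x,y,T}_{\sigma_n \wedge T} \to V^{x,y,T}_T$ almost surely as $n \to \infty$, so by uniform integrability I can pass the limit inside the expectation to obtain $v(x,y,T) = \expec[V^{x,y,T}_T] = \expec[g(S^{x,y}_T)] = u(x,y,T)$ on $\Real_{++}^3$, and the equality extends to $\Real_+^3$ by continuity. I do not anticipate a genuine obstacle here: the only subtle point is ensuring the sublinear tail bound on $h$ is strong enough to defeat the possible strict-local-martingale behavior of $S^{x,y}$, and this is precisely why the hypothesis $\limsup_{x \to \infty} g(x)/x = 0$ (equivalently $\limsup_{x \to \infty} h(x)/x = 0$ via Lemma~\ref{lem: h growth}) is the right one.
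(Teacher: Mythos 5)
Your proof is correct and has the same skeleton as the paper's: localize the nonnegative local martingale $V^{x,y,T}$, establish uniform integrability of $\{V^{x,y,T}_{\sigma_n \wedge T}\}_{n}$, and pass to the limit to identify $v$ with $u$. The only real variation is in how UI is obtained. The paper invokes the de la Vall\'{e}e--Poussin criterion, choosing a nondecreasing auxiliary $\phi$ with $\lim_{y\to\infty}\phi(y)/y=\infty$ and $\phi(h(x))\leq x$ (in effect a generalized inverse of $h$), then bounding $\expec[\phi(h(S^{x,y}_{\sigma_n\wedge T}))]\leq \expec[S^{x,y}_{\sigma_n\wedge T}]\leq x$. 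You replace this with a direct $\epsilon$--$K$ tail estimate, using $h(z)\leq\epsilon z$ for $z>M_\epsilon$ together with monotonicity of $h$ for the event inclusion. Your route avoids constructing $\phi$ and invoking the abstract criterion, so it is a bit more elementary and self-contained; both versions rest on exactly the same two inputs, namely $\lim_{x\to\infty}h(x)/x=0$ from Lemma~\ref{lem: h growth} and the supermartingale bound $\sup_n\expec[S^{x,y}_{\sigma_n\wedge T}]\leq x$. One tiny remark: your parenthetical ``(since $h$ is nondecreasing)'' is not what upgrades $\limsup_{x\to\infty}h(x)/x=0$ to $\lim_{x\to\infty}h(x)/x=0$ --- nonnegativity of $h$ already does that --- though the monotonicity of $h$ is indeed what you need later for the inclusion $\{h(S^{x,y}_{\sigma_n\wedge T})>K\}\subseteq\{S^{x,y}_{\sigma_n\wedge T}>M_\epsilon\}$.
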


\begin{proof}
Fix $T \in \Real_+$. It follows from Lemma~\ref{lem: h growth} that $\lim_{x\rightarrow \infty} h(x)/x = 0$. Then there exists a nondecreasing function $\phi:\Real_+ \rightarrow \Real_+ \cup\set{\infty}$ with $\lim_{x\rightarrow \infty} \phi(x) / x = \infty$, such that $\phi(h(x)) \leq x$ holds for all $x\in \Real_+$. Therefore, for any localizing sequence $\set{\sigma_n}_{n\in \Natural}$ of the local martingale $V^{x,y, T}$, we have
 \[
  \expec \bra{\phi\pare{h \pare{S^{x, y}_{\sigma_n \wedge T}}}} \leq \expec \bra{S^{x, y}_{\sigma_n \wedge T}} \leq x, \quad \text{ for all } n \in \Natural.
 \]
 From de la Vall\'{e}e-Poussin criterion, $\set{h\pare{S^{x, y}_{\sigma_n \wedge T}}}_{n\in \Natural}$ is a uniformly integrable family. The rest follows from arguments similar to the ones used in the proof of Proposition~\ref{prop: uniqueness}.
\end{proof}

\section{Proofs of Main Results}\label{sec: proof of main thm}


The proof consists of three steps. First, the value function is shown to be a classical solution to \eqref{eq: pricing eq} in Section~\ref{sec: u classical soln}. Second, any classical solution is proved to be a stochastic solution in Section~\ref{sec: clasical->stoch}. Finally, in Section~\ref{sec:fsb}, Theorems~\ref{thm: existence} and \ref{thm: uniqueness} are proved utilizing the results of Section~\ref{sec: stoch soln}.

\subsection{The value function is a classical solution}\label{sec: u classical soln}
Let us first focus on Case (C) in Definition~\ref{def: classical soln}.

\begin{lem}\label{lem: u_eps}
 In Case (C) of Definition~\ref{def: classical soln}, $u\in \overline{\fC}$.
\end{lem}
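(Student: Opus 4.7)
My plan is to exhibit an explicit sequence in $\fC$ converging pointwise to $u$. The natural candidates are the value functions associated with bounded truncations of the payoff: set $g^m \dfn g \wedge m$ and
\[
u^m(x,y,T) \dfn \expec\bra{g^m\pare{S^{x,y}_T}}, \quad (x,y,T) \in \Real_+^3, \ m \in \Natural.
\]
Monotone convergence gives $u^m \uparrow u$ pointwise on $\Real_+^3$, so the claim reduces to showing $u^m \in \fC$ for every $m$.

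To verify this, I would check the four defining properties of $\fC$ one by one. Properties (i)--(iv) in the interior and the global continuity $u^m \in C(\Real_+^3)$ are already provided by Lemma~\ref{thm: u cont} applied to the bounded continuous payoff $g^m$; in particular $u^m \in C^{2,2,1}(\Real_{++}^3)$ and $u^m$ solves \eqref{eq: pricing eq} there. The bound $0 \leq u^m \leq h$ follows from $g^m \leq g \leq h$, the concavity of $h$, and Jensen's inequality exactly as in \eqref{eq: u growth}, using that $S^{x,y}$ is a nonnegative supermartingale. What remains, and constitutes the substantive obstacle, is the regularity statement up to the degenerate boundary $\{y=0\}$: the inclusion $u^m \in C^{0,1,1}(\Real_{++} \times \Real_+ \times \Real_{++})$ together with condition~(ii), the boundedness of $b^2(y)\,\partial^2_{xx} u^m(x,y,T)$ as $y \downarrow 0$.

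To obtain the boundary regularity, I would exploit that in Case~(C) the drift condition $\mu(0) > 0$ makes $\{y=0\}$ instantaneously reflecting with nonnegative Fichera function, so the degenerate parabolic operator $\cL$ fits into the frameworks developed for the Heston-type operator (e.g.\ Daskalopoulos--Hamilton and Daskalopoulos--Lee for the Heston PDE, or Feehan for general degenerate elliptic/parabolic operators of Fichera type). Since $g^m$ is continuous and bounded, these results yield classical solvability with continuous extensions of $u^m$ and of $\partial_y u^m$, $\partial_T u^m$ up to $y=0$, which is exactly the $C^{0,1,1}$ regularity required. An alternative route, purely probabilistic, is to observe that the reflection of $Y^y$ at $0$ (with local time zero there, by Lemma~\ref{lemma: local time Y}) makes $y \mapsto \prob$-law$(S^{x,y}_\cdot)$ continuously differentiable in $y$ down to $y=0$, and to deduce $\partial_y u^m$ by differentiating under the expectation and invoking the stability estimate~\eqref{eq: Y stability}; however, this requires additional work to control the $y$-derivative uniformly, so I would prefer the PDE route.

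Once the $C^{0,1,1}$ regularity up to $y=0$ is in hand, condition~(ii) drops out of the PDE itself. Solving \eqref{eq: pricing eq} algebraically for the $\partial^2_{xx}$ term on $\Real_{++}^3$ gives
\[
\tfrac12\, b^2(y)\, x^2\, \partial^2_{xx} u^m = \partial_T u^m - \mu(y)\, \partial_y u^m - \tfrac12\, \sigma^2(y)\, \partial^2_{yy} u^m - \rho\, b(y)\sigma(y)\, x\, \partial^2_{xy} u^m.
\]
As $y \downarrow 0$ the first two terms on the right stay bounded by the regularity just established, while $\sigma(0) = b(0) = 0$ annihilates the last two terms (using the interior $C^{2,2,1}$ bound on a compact slab $y \in (0, \eps]$). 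Hence $\limsup_{y \downarrow 0} b^2(y)\, |\partial^2_{xx} u^m(x,y,T)| < \infty$ for every $(x,T) \in \Real_{++}^2$, establishing (ii). This shows $u^m \in \fC$, and passing to the pointwise limit yields $u \in \overline{\fC}$.
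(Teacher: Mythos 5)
Your choice of truncating sequence is where the argument breaks. You set $g^m = g \wedge m$, which is bounded and continuous but not smooth, and this already rules out the two devices the paper relies on. First, the paper's sequence $g^{\epsilon}$ is \emph{mollified} as well as truncated, producing payoffs that are $C^\infty$ with $(g^{\epsilon})'$ and $(g^{\epsilon})''$ compactly supported in $\Real_{++}$. Condition~(ii) of Definition~\ref{def: class C} is then obtained by differentiating under the expectation:
\[
x^2\,\partial^2_{xx}u^{\epsilon}(x,y,T)\;=\;\expec\!\bra{\bigl(S^{x,y}_T\bigr)^2\,(g^{\epsilon})''\bigl(S^{x,y}_T\bigr)},
\]
which is \emph{uniformly bounded on all of} $\Real_+^3$ precisely because $(g^\epsilon)''$ is bounded and compactly supported. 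Multiplying by $b^2(y)\to 0$ then gives (ii) with limit $0$. For $g^m=g\wedge m$ this identity is not even well-defined (you cannot take $(g^m)''$), and there is no a~priori bound on $x^2\partial^2_{xx}u^m$ near $y=0$.

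Your proposed substitute --- reading (ii) off the PDE by solving for the $\partial^2_{xx}$ term --- is circular. To conclude that $\tfrac12\sigma^2(y)\partial^2_{yy}u^m$ and $\rho b(y)\sigma(y)x\,\partial^2_{xy}u^m$ vanish as $y\downarrow 0$, you invoke ``the interior $C^{2,2,1}$ bound on a compact slab $y\in(0,\eps]$.'' But the interior Schauder estimates only give bounds on domains compactly included in $\Real_{++}^3$; a slab with $y\in(0,\eps]$ is \emph{not} compactly included, since its closure touches the degenerate boundary $\{y=0\}$, where the estimates deteriorate. Nothing controls $\partial^2_{yy}u^m$ or $\partial^2_{xy}u^m$ as $y\downarrow 0$ --- indeed the class $\fC$ deliberately asks only for $C^{0,1,1}$ regularity, not second $y$-derivatives, up to the boundary. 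Second, the boundary $C^{0,1,1}$ regularity you would need is obtained in the paper by adapting the arguments of \cite{Ekstrom-Tysk-stochvol}, whose hypotheses on the payoff explicitly include the smoothness and compactly supported derivatives that your $g^m$ lacks; you cannot simply cite that machinery, nor the Daskalopoulos--Hamilton/Feehan results, for the kinked payoff $g\wedge m$ without substantial additional work (and the paper does not take that route). The fix is the paper's: mollify as well as truncate, and then verify (ii) by the direct second-derivative formula rather than by backing it out of the PDE.
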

\begin{proof}
  Since $g$ satisfies Assumption~\ref{ass: payoff}, there exists a sequence $\set{g^\epsilon}_{\epsilon>0}$, such that, for each $\epsilon$:
 \begin{enumerate}
	\item[(i)] $g^{\epsilon}$ is bounded;
	\item[(ii)] $g^{\epsilon} \in C^{\infty}(\Real_{++})$;
	\item[(iii)] $(g^{\epsilon})^{'}$ and $(g^{\epsilon})^{''}$ have compact support in $\Real_{++}$;
	\item[(iv)] $g^{\epsilon}(x) \leq h(x)$ for $x\in \Real_+$, and
	\item[(v)] $\lim_{\epsilon\downarrow 0} g^{\epsilon}(x) = g(x)$ for $x\in \Real_+$.
\end{enumerate}
Indeed, for $\epsilon \in (0,1)$, consider
\[
 \wt{g}^\epsilon(x) = \left\{\begin{array}{ll}g(0), & -2\epsilon \leq x \leq 2 \epsilon \\ g(x) \psi^\epsilon(x), & x > 2\epsilon \end{array}\right., \quad \text{ where } \psi^\epsilon\in C(\Real_+) \text{ and } \psi^\epsilon(x) = \left\{\begin{array}{ll}1, & x\leq 1/\epsilon \\ 0, & x>2/\epsilon \end{array}\right..
\]
Then define $g^\epsilon := \eta^\epsilon  \ast \wt{g}^\epsilon$, $\eta^{\epsilon}$ is the standard mollifier and $\ast$ denotes the convolution operator. It is clear that $g^\epsilon(x) = g(0)$ for $x\in [0, \epsilon]$. Therefore, items (i) - (iii) and (v) are clearly satisfied. In order to check item (iv), we notice that $g^\epsilon(x) = g(0)\leq h(0) \leq h(x)$ for $x\in [0, \epsilon]$. On the other hand, we claim $\int_{-\epsilon}^{\epsilon} \eta^\epsilon(y) h(x-y)dy \leq h(x)$ for $x>\epsilon$. This claim follows from $\eta^\epsilon(y) = \eta^{\epsilon}(-y)$, $\int_{-\epsilon}^{\epsilon} \eta^\epsilon(y) dy =1$, and $h(x+y)- h(x) \leq h(x) - h(x-y)$ for $y>0$ thanks to the concavity of $h$. Hence item (iv) holds because $g^{\epsilon}(x) = \int_{-\epsilon}^{\epsilon} \eta^{\epsilon}(y) \tilde{g}^{\epsilon}(x-y) dy \leq \int_{-\epsilon}^{\epsilon}\eta^{\epsilon}(y) h(x-y) dy \leq h(x)$, for $x> \epsilon$.

Define $u^{\epsilon}(x,y,T) := \expec[g^{\epsilon}(S^{x,y}_T)]$ for $(x,y,T)\in \Real_+^3$. An estimate similar to \eqref{eq: u growth} shows $u^\epsilon \leq h$ on $\Real_+^3$. Moreover, item (iv) above and the dominated convergence theorem combined implies that
\[
 u(x,y,T) = \lim_{\epsilon \downarrow 0} u^{\epsilon}(x,y,T), \quad \text{ for } (x,y,T)\in \Real_+^3.
\]
Then the statement follows, if $u^{\epsilon}\in \fC$ for each $\epsilon\in (0,1)$. This property of $u^\epsilon$ will be confirmed in the rest of the proof using an argument from \cite{Ekstrom-Tysk-stochvol}.

First, boundedness of $g^\epsilon$ and \eqref{eq: cont L0} combined implies that $u^\epsilon \in C(\Real_+^3)$. Then an argument similar to that in Lemma~\ref{thm: u cont} shows that $\partial_T u^{\epsilon} = \cL u^{\epsilon}$ on $\Real^3_{++}$. Moreover, the dominated convergence theorem implies that
 \[
  x^2\, \partial^2_{xx} u^{\epsilon}(x,y,T) = \expec \bra{(S^{x,y}_T)^2 \,(g^{\epsilon})^{''}(S^{x,y}_T)}.
 \]
 Since $(g^{\epsilon})^{''}$ has compact support and it is finite at $x=0$, $x^2\, |\partial^2_{xx} u^{\epsilon}|$ is bounded on $\Real^3_+$, hence $\lim_{y\downarrow 0} b^2(y) |\partial^2_{xx} u^{\epsilon}(x,y,T)|=0$ for $(x,T)\in \Real_{++}^2$.

The proof that $\partial_T u^\epsilon, \partial_y u^\epsilon \in C(\Real_{++} \times \Real_+ \times \Real_{++})$ follows the line of arguments presented in \cite{Ekstrom-Tysk-stochvol}. The assumptions on the payoff and coefficients in \cite{Ekstrom-Tysk-stochvol} are satisfied in our case (see properties of $g^\epsilon$ in items (i) - (iii)).
 Even though $b(y)$ is chosen as $\sqrt{y}$ in \cite{Ekstrom-Tysk-stochvol}, their arguments go through if $b^2 \in C^1(\Real_+)$, $(b^2)^{'}$ H\"{o}lder continuous, and has at most polynomial growth. In particular, (22) in \cite{Ekstrom-Tysk-stochvol} is replaced by $1\leq b^2(\frac{\overline{y}}{m}) x_0^2 \frac{k^2}{m} \leq 2$. For a sequence $\set{m_n}_{n\in \Natural} \uparrow \infty$, a sequence $\set{k_n}_{n\in \Natural}$ can still be chosen appropriately so that above inequalities are satisfied. Moreover, Proposition 4.1 of \cite{Ekstrom-Tysk-stochvol} still holds. Indeed, for any $(x,y)\in \Real_+^2$ and a sequence $\set{(x_n, y_n)}_{n\in \Natural}$ in a bounded neighborhood of $(x,y)$, there exists a constant $C_{T, \epsilon}$ such that
 \[
  \left|\exp \pare{\int_0^{\nu} \mu^{'} (Y^{y_n}_\sigma)\,d \sigma} \pare{b^2(Y^{y_n}_{\nu})}^{'} \pare{S^{x_n, y_n}_{\nu}}^2 \partial^2_{xx} u^{\epsilon} \pare{S^{x_n, y_n}_{\nu}, Y^{y_n}_{\nu}, T-\nu}\right| \leq C_{T, \epsilon} \left|\pare{b^2(Y^{y_n}_{\nu})}^{'}\right|,
 \]
 for any $n\in \Natural$ and $\nu \in [0,T]$. Thanks to the growth assumption on $(b^2)^{'}$ and the moment estimate \eqref{eq: moment Y},
 $\set{\left|\pare{b^2(Y^{y_n}_{\nu})}^{'}\right|}_{n\in \Natural}$ is a uniformly integrable family. Therefore, the function $v^{\epsilon}$ defined as
 \[
  v^{\epsilon}(x,y,T):= \expec\bra{\int_0^T \exp \pare{\int_0^{\nu} \mu^{'} (Y^{y}_\sigma)\,d \sigma} \pare{b^2(Y^{y}_{\nu})}^{'} \pare{S^{x, y}_{\nu}}^2 \partial^2_{xx} u^{\epsilon} \pare{S^{x, y}_{\nu}, Y^{y}_{\nu}, T-\nu}\, d\nu},
 \]
 is still a continuous function on $\Real^3_+$.
\end{proof}

\begin{rem}\label{rem: first order bc}
 It is also proved in \cite{Ekstrom-Tysk-stochvol} that $u^\epsilon$ satisfies
 \begin{equation}\label{eq: boundary cond 1}
  \partial_T u^\epsilon u(x,0,T) = \mu(0) \partial_y u^\epsilon(x,0,T), \quad (x,T)\in \Real^2_{++}.
 \end{equation}
 This first order equation will not be used to prove the uniqueness of classical solutions in Theorem~\ref{thm: uniqueness}. Therefore, according to the consideration in Remark~\ref{rem: boundary cond}, \eqref{eq: boundary cond 1} is not included in the definition of the classical solution as a boundary condition at $y=0$.
\end{rem}

\begin{prop}\label{prop: u classical soln}
The value function $u$ is a classical solution to \eqref{eq: pricing eq}.
\end{prop}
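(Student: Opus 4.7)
The plan is to verify each of the three cases of Definition \ref{def: classical soln} separately, noting that essentially all the analytic work has already been carried out in the preceding lemmas. In each case I will simply assemble results already at hand: Lemma \ref{thm: u cont} provides interior regularity together with the PDE, the chain of inequalities \eqref{eq: u growth} provides the growth bound $0 \leq u \leq h$, and Lemma \ref{lem: u_eps} handles the more delicate Case (C).

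In Case (A), where $\prob[\tau^y_0 = \infty] = 1$ for $y \in \Real_{++}$, Lemma \ref{thm: u cont} and \eqref{eq: u growth} immediately deliver every required property: $u \in C(\Real_+^3) \cap C^{2,2,1}(\Real^3_{++})$, $0 \leq u \leq h$, and the PDE $\partial_T u = \cL u$ on $\Real_{++}^3$ with initial datum $u(x, y, 0) = g(x)$. So no additional work is needed beyond invoking those two results.

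In Case (B), the only thing beyond Case (A) that must be verified is the Dirichlet boundary condition \eqref{eq: boundary cond 2}. Because $\mu(0) = 0$ (and $\sigma(0) = 0$ by the standing assumption), the point $0$ is absorbing for $Y$; hence $Y^0 \equiv 0$, which in turn forces $b(Y^0) \equiv 0$ and therefore $H^0_T \equiv 1$ by the definition \eqref{eq: def H}. Consequently $S^{x, 0}_T = x H^0_T = x$ almost surely, so $u(x, 0, T) = \expec[g(S^{x,0}_T)] = g(x)$, as required.

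Case (C) is handled by Lemma \ref{lem: u_eps}, which already states $u \in \overline{\fC}$ via the pointwise approximation $u = \lim_{\epsilon \downarrow 0} u^\epsilon$ with each $u^\epsilon \in \fC$. The membership $u \in C(\Real_+^3)$ and the initial condition $u(x, y, 0) = g(x)$ are inherited from Lemma \ref{thm: u cont}. I do not anticipate any real obstacle here; the substantive analytic content of the proposition already resides in Lemma \ref{thm: u cont} and Lemma \ref{lem: u_eps}, so the proof itself is essentially a bookkeeping exercise matching each case's requirements to results already established in the preceding sections.
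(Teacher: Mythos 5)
Your proof is correct and follows essentially the same route as the paper's: assemble Lemma \ref{thm: u cont}, the bound \eqref{eq: u growth}, and Lemma \ref{lem: u_eps} according to the case split in Definition \ref{def: classical soln}. You actually spell out the verification of the Dirichlet condition \eqref{eq: boundary cond 2} in Case (B) (via $Y^0 \equiv 0$, hence $H^0 \equiv 1$ and $S^{x,0}_T = x$), which the paper's proof merely asserts; this is a small but welcome addition, and otherwise the two proofs coincide.
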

\begin{proof}
 When $\prob[\tau_0^y =\infty]=1$ or $\prob[\tau_0^y <\infty]>0$ with $\mu(0)=0$, the statement follows from Lemma~\ref{thm: u cont} and the fact that $u$ satisfies \eqref{eq: boundary cond 2} when the boundary point $0$ is absorbing. When $\prob[\tau_0^y<\infty]>0$ and $\mu(0)>0$, the statement follows from Lemmas~\ref{lem: u_eps} and \ref{thm: u cont}.
\end{proof}

Recall that $\delta(x,y,T) = x- \expec[S^{x,y}_T]$. The following result follows from Proposition~\ref{prop: u classical soln} when $g(x) \equiv x$.
\begin{cor}\label{cor: delta}
 $\delta$ is a classical solution to \eqref{eq: pricing eq} with zero initial condition.
\end{cor}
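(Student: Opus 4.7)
The plan is to reduce this to the special case of Proposition~\ref{prop: u classical soln} with the identity payoff $g_\mathrm{id}(x) = x$. Write $u_\mathrm{id}(x,y,T) := \expec[S^{x,y}_T]$ for the resulting value function, so that by definition $\delta(x,y,T) = x - u_\mathrm{id}(x,y,T)$. The crucial observation is that $v(x,y,T) := x$ is $C^{\infty}$ and trivially satisfies $\partial_T v = 0 = \cL v$ on $\Real_{++}^3$, since $\cL$ involves only $\partial_y$, $\partial^2_{xx}$, $\partial^2_{yy}$, and $\partial^2_{xy}$ (no lone $\partial_x$ term, as the short rate is zero). By linearity of the PDE, $\delta = v - u_\mathrm{id}$ inherits the equation $\partial_T \delta = \cL \delta$ from $u_\mathrm{id}$, and the initial condition becomes $\delta(x,y,0) = x - x = 0$. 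The bound $0 \leq \delta(x,y,T) \leq x$ follows at once from the fact that $S^{x,y}$ is a nonnegative local martingale starting from $x$, hence a supermartingale.

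It remains to check the case-specific regularity and boundary conditions in Definition~\ref{def: classical soln}, each of which transfers from $u_\mathrm{id}$. In Case~(A), the required $C(\Real_+^3) \cap C^{2,2,1}(\Real_{++}^3)$ regularity is inherited from $u_\mathrm{id}$ together with the smoothness of $v(x,y,T) = x$. In Case~(B), $\mu(0) = 0$ makes $0$ absorbing for $Y$, so $Y^0 \equiv 0$ and (using $b(0) = 0$) $S^{x,0}_t \equiv x$, giving $u_\mathrm{id}(x,0,T) = x$ and hence $\delta(x,0,T) = 0$; this is precisely the Dirichlet condition \eqref{eq: boundary cond 2} with boundary data identically zero. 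In Case~(C), apply Lemma~\ref{lem: u_eps} to approximate $u_\mathrm{id}$ by $u^{\epsilon}_\mathrm{id} \in \fC$ converging pointwise, and set $\delta^{\epsilon} := x - u^{\epsilon}_\mathrm{id}$. One verifies directly that $\delta^{\epsilon} \in \fC$: the regularity conditions of Definition~\ref{def: class C}(i) are inherited; since $\partial^2_{xx} \delta^{\epsilon} = -\partial^2_{xx} u^{\epsilon}_\mathrm{id}$, the condition $\limsup_{y \downarrow 0} b^2(y) |\partial^2_{xx} \delta^{\epsilon}(x,y,T)| < \infty$ holds; the bound $0 \leq \delta^{\epsilon} \leq h_\mathrm{id}(x) = x$ follows from $0 \leq u^{\epsilon}_\mathrm{id} \leq x$; and the PDE is preserved by linearity. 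Pointwise convergence $\delta^{\epsilon} \to \delta$ then yields $\delta \in \overline{\fC}$, and continuity of $\delta$ on $\Real_+^3$ follows from that of $u_\mathrm{id}$ established in Lemma~\ref{thm: u cont}.

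There is no genuine obstacle in this argument beyond correctly invoking the earlier results: all the real analytic work has already been done in Proposition~\ref{prop: u classical soln} and Lemma~\ref{lem: u_eps}, and the corollary is essentially a bookkeeping exercise exploiting the fact that $v(x,y,T) = x$ is a trivial classical solution of \eqref{eq: pricing eq} with initial condition $g_\mathrm{id}(x) = x$.
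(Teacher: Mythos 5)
Your proof is correct and follows essentially the same route the paper takes — the paper's own justification is the single sentence ``follows from Proposition~\ref{prop: u classical soln} when $g(x)\equiv x$,'' and you have simply unpacked what that entails: $v(x,y,T)=x$ is annihilated by $\partial_T - \cL$, so by linearity $\delta = x - u_{\mathrm{id}}$ inherits the PDE, regularity, and boundary behavior from $u_{\mathrm{id}}$, with the initial condition shifting to zero; the case-by-case checks (in particular $\delta^\epsilon = x - u^\epsilon_{\mathrm{id}}\in\fC$ in Case~(C)) are exactly the bookkeeping that makes the implicit linearity argument rigorous. One small remark for precision: Definition~\ref{def: classical soln} is phrased with growth dominator $h$ built from the initial data, and for zero initial data a literal reading would give $h\equiv 0$; you correctly resolve this by taking the dominator to be $h_{\mathrm{id}}(x)=x$ (i.e.\ $0\le\delta\le x$ from the supermartingale property of $S^{x,y}$), which is the intended reading and is what the proof of Theorem~\ref{thm: uniqueness} actually needs when it forms $u+\eta\delta$.
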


\subsection{Any classical solution is a stochastic solution}\label{sec: clasical->stoch}
In order to connect results in the last section to the main results, classical solutions are shown to be stochastic solutions in this section. To facilitate our analysis on Case (C), let us first study the probabilistic property of functions in the class $\fC$.
\begin{lem}\label{lemma: class C}
 For any $v\in \fC$ and $n\in \Natural$, $V^{x,y,T}_{\cdot \wedge \sigma_n} := v\pare{S^{x,y}_{\cdot \wedge \sigma_n}, Y^y_{\cdot \wedge \sigma_n}, T- \cdot \wedge \sigma_n}$ is a martingale on $[0,T]$. Here, $\sigma_n := \inf\set{t\in \Real_+ \such Y_t=n, \text{ or } S_t = n^{-1}, \text{ or } S_t =n} \wedge (T-T/n)$, for $n\in \Natural$.
\end{lem}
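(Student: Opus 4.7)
The plan is to apply Itô's formula to $V^{x,y,T}_{\cdot \wedge \sigma_n}$ and exploit the PDE $\partial_T v = \cL v$ satisfied by $v \in \fC$ on $\Real_{++}^3$ to kill the drift. First I would record a boundedness step: on $[0, \sigma_n]$ the triple $(S_s, Y_s, T-s)$ takes values in the compact set $[n^{-1}, n] \times [0, n] \times [T/n, T]$, and since $0 \leq v \leq h$ with $h$ continuous, $V^{x,y,T}_{\cdot \wedge \sigma_n}$ is uniformly bounded. Hence it is enough to prove that it is a local martingale; the true martingale property follows. In Cases (A) and (B) of Definition \ref{def: classical soln}, the trajectory either never touches the boundary or is absorbed there (and $b(0)=0$ freezes $S$ after absorption, so the boundary condition \eqref{eq: boundary cond 2} matches the value of $v$ exactly), and the standard Itô formula applies directly. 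The substantive case is Case (C), where $Y$ may touch $0$ and reflect instantaneously while $v$ is only $C^{0,1,1}$ (not $C^{2,2,1}$) at $y=0$.

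For Case (C), I would regularize by shifting: set $v^{(\delta)}(x,y,T) := v(x, y+\delta, T)$ for $\delta > 0$. By condition (i) of Definition \ref{def: class C}, $v^{(\delta)} \in C^{2,2,1}$ on an open neighborhood of $\Real_{++} \times \Real_+ \times \Real_{++}$, so the classical Itô formula yields
\begin{equation*}
v^{(\delta)}(S_{t\wedge\sigma_n}, Y_{t\wedge\sigma_n}, T-t\wedge\sigma_n) - v^{(\delta)}(x,y,T) = \int_0^{t\wedge\sigma_n} \bra{\cL v^{(\delta)} - \partial_T v^{(\delta)}}(S_s, Y_s, T-s)\, ds + M^{(\delta)}_{t\wedge\sigma_n},
\end{equation*}
where $M^{(\delta)}$ is a bounded martingale on $[0, \sigma_n]$ (stochastic integrands are bounded since $S$ is confined to $[n^{-1},n]$ and $Y$ to $[0,n]$). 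Using the PDE $\partial_T v = \cL v$ at $(x, y+\delta, T) \in \Real_{++}^3$, the drift integrand reduces to the sum of differences of coefficients evaluated at $y$ versus $y+\delta$ against derivatives of $v$ at $y+\delta$: $(\mu(y)-\mu(y+\delta))\partial_y v$ plus $\tfrac12(b^2(y)-b^2(y+\delta))x^2 \partial_{xx} v$ plus analogous $\partial_{yy}$ and $\partial_{xy}$ terms, all evaluated at $(S_s, Y_s+\delta, T-s)$.

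Passing $\delta \downarrow 0$: pointwise, for each $s$ with $Y_s > 0$, the integrand converges to zero by continuity of $\mu$, $b^2$, $\sigma^2$, $b\sigma$ and of the derivatives of $v$ in $\Real_{++}^3$; on $\{s: Y_s = 0\}$, Lemma \ref{lemma: local time Y} (local time at $0$ vanishes) together with the occupation times formula ensures this set has zero Lebesgue measure and thus does not contribute. Provided the integrand is dominated uniformly in $\delta$ by an integrable function, the dominated convergence theorem yields that the drift vanishes in the limit, while continuity of $v$ forces the LHS to converge to $V^{x,y,T}_{t\wedge\sigma_n} - v(x,y,T)$. Consequently $M^{(\delta)}_{t\wedge\sigma_n}$ converges to the same limit, and uniform boundedness of $M^{(\delta)}$ identifies this limit as a bounded martingale. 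The main obstacle is producing that uniform domination near $y=0$: condition (ii) of Definition \ref{def: class C} handles the $b^2 \partial_{xx} v$ term directly, and I expect the analogous bounds for $\sigma^2 \partial_{yy} v$ and $b\sigma \partial_{xy} v$ to be recovered by solving the PDE for $\sigma^2 \partial_{yy} v = 2\partial_T v - 2\mu \partial_y v - b^2 x^2 \partial_{xx} v - 2\rho x b\sigma \partial_{xy} v$ and invoking condition (ii), together with continuity of $\partial_y v$ and $\partial_T v$ up to $y=0$.
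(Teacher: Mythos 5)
Your plan correctly reduces matters to Case (C) and correctly computes the drift after applying It\^o to the shifted function $v^{(\delta)}(x,y,T)=v(x,y+\delta,T)$ and cancelling via the PDE at $(S_s, Y_s+\delta, T-s)\in\Real_{++}^3$. The remaining drift is indeed a sum of ``coefficient at $Y_s$ minus coefficient at $Y_s+\delta$'' terms against the second derivatives of $v$ at $(S_s, Y_s+\delta, T-s)$. The $\mu$-term and the $b^2\partial^2_{xx}v$-term are under control, and the null set $\{Y_s=0\}$ is handled by Lemma \ref{lemma: local time Y}, exactly as you say. The gap is in the $\partial^2_{yy}v$ and $\partial^2_{xy}v$ terms, and the fix you propose does not close it. Solving the PDE for $\sigma^2\partial^2_{yy}v$ is circular for $\rho\neq 0$: the right-hand side contains $2\rho x\, b\sigma\,\partial^2_{xy}v$, which is exactly as uncontrolled as $\sigma^2\partial^2_{yy}v$ itself. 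What the PDE, continuity of $\partial_T v$, $\partial_y v$ up to $y=0$, and condition (ii) jointly give is a bound on the single combination $\tfrac12\sigma^2\partial^2_{yy}v + \rho\, b\sigma\, x\,\partial^2_{xy}v$, not on the two pieces separately. Your drift, however, has the form $(\sigma^2(Y_s)-\sigma^2(Y_s+\delta))\partial^2_{yy}v + \rho(b\sigma(Y_s)-b\sigma(Y_s+\delta))x\,\partial^2_{xy}v$, and the two difference factors are generally not proportional (at $Y_s=0$ they both vanish, but for small $Y_s>0$ the ratios $\sigma^2(Y_s)/\sigma^2(Y_s+\delta)$ and $b\sigma(Y_s)/(b\sigma)(Y_s+\delta)$ differ), so one cannot re-express this as (bounded scalar) $\times$ (the controlled PDE combination). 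Nothing in Definition \ref{def: class C} or the standing assumptions provides a separate bound on $\sigma^2\partial^2_{yy}v$ or $b\sigma\,\partial^2_{xy}v$ near $y=0$, and without one your dominated-convergence step fails.

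The paper's argument is built precisely to avoid needing such bounds: it truncates the \emph{process} rather than shifting the \emph{function}. Applying It\^o--Tanaka--Meyer to $v(S, \max\{Y,\epsilon\}, T-\cdot)$, the quadratic variations $d\langle\max\{Y,\epsilon\}\rangle=\indic_{(\epsilon,\infty)}(Y)\sigma^2(Y)\,dt$ and $d\langle S,\max\{Y,\epsilon\}\rangle=\indic_{(\epsilon,\infty)}(Y)\rho\, b\sigma(Y)\,S\,dt$ carry an indicator that vanishes on $\{Y\leq\epsilon\}$, so $\partial^2_{yy}v$ and $\partial^2_{xy}v$ simply never appear on the problematic set; the PDE kills the drift on $\{Y>\epsilon\}$. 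Only the $d\langle S\rangle=b^2(Y)S^2\,dt$ contribution survives on $\{Y\leq\epsilon\}$ (with $\partial^2_{xx}v$ evaluated at height $\epsilon$), and this is the one term condition (ii) is tailored to control; the price paid is a local-time correction, handled through Lemmas \ref{lemma: local time Y} and \ref{lem: loc time UI}. Your shift does not produce this decoupling, which is why I think the route is not salvageable for $\rho\neq 0$ without strengthening the definition of $\fC$.
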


Before proving this result, we will analyze the properties of the local time for $Y$. Let $L_t(\epsilon)$ denote the local time $Y$ accumulates at level $\epsilon$ up to time $t\in \Real_+$. Recall that we choose $L$ to be $\prob$-a.s. jointly continuous in the time variable and \cadlag\, in the spatial variable --- see Theorem~3.7.1 of \cite{Karatzas-Shreve-BM}. The following two results will be useful in the proof of Lemma~\ref{lemma: class C}.

\begin{lem}\label{lemma: local time Y}
Fix $y \in \Real_+$. If $\mu(0)>0$, then $L_t(0)=0$ for all $t\geq 0$. Hence $\int_{\Real_+} \indic_{\set{Y^y_t = 0}} d t= 0$.
\end{lem}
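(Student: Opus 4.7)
The plan is to combine Tanaka's formula with the occupation time formula to first rewrite $L_t(0)$ as a multiple of the Lebesgue time spent at zero, and then force that time to vanish via a divergence argument. First, since $Y^y \geq 0$, Tanaka's formula applied to $(Y^y_t)^+ = Y^y_t$ gives
\[
Y^y_t = y + \int_0^t \indic_{\{Y^y_s > 0\}} dY^y_s + \tfrac{1}{2} L_t(0),
\]
so, setting $A_t := \int_0^t \indic_{\{Y^y_s = 0\}} ds$,
\[
\tfrac{1}{2} L_t(0) = \int_0^t \indic_{\{Y^y_s = 0\}} dY^y_s = \mu(0)\, A_t + \int_0^t \indic_{\{Y^y_s = 0\}} \sigma(Y^y_s)\, dB_s.
\]
The stochastic integral has quadratic variation $\int_0^t \indic_{\{Y^y_s=0\}}\sigma^2(Y^y_s) ds$, which vanishes because $\sigma(0)=0$; hence that integral is identically zero. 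This yields the key identity $L_t(0) = 2\mu(0)\, A_t$, so since $\mu(0)>0$, it suffices to show $A_t = 0$ almost surely.

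Next, I would apply the occupation time formula with the nonnegative Borel function $\phi(a) := \indic_{\{a>0\}}/\sigma^2(a)$ (and $\phi(0):=0$), handled via the monotone truncation $\phi_n := \phi \wedge n$ to cope with unboundedness near $0$. Using $d\langle Y^y\rangle_s = \sigma^2(Y^y_s) ds$ and $\phi(Y^y_s)\sigma^2(Y^y_s) = \indic_{\{Y^y_s>0\}}$, the formula gives
\[
\int_0^\infty \frac{L_t(a)}{\sigma^2(a)}\, da \;=\; \int_0^t \indic_{\{Y^y_s>0\}}\, ds \;=\; t - A_t \;\leq\; t.
\]

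The contradiction step is the main content. Suppose $\prob[A_t > 0] > 0$. On the event $\{A_t > 0\}$, the first identity gives $L_t(0) > 0$, and by right-continuity of $a\mapsto L_t(a)$ at $0$, there is a random $a_0>0$ with $L_t(a) \geq L_t(0)/2$ for all $a \in (0, a_0)$. Standing Assumption~\ref{ass: coeffs} gives $\sigma^2 \in C^1$ with $\sigma^2(0)=0$, so one has deterministic constants $C, \epsilon > 0$ with $\sigma^2(a) \leq C a$ for $a \in (0,\epsilon)$. Consequently on $\{A_t > 0\}$,
\[
\int_0^\infty \frac{L_t(a)}{\sigma^2(a)}\, da \;\geq\; \frac{L_t(0)}{2C} \int_0^{a_0 \wedge \epsilon} \frac{da}{a} \;=\; +\infty,
\]
contradicting the upper bound $t$. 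Thus $A_t = 0$ a.s., and by continuity of $t \mapsto A_t$ this holds simultaneously for all $t \geq 0$; then $L_t(0) = 2\mu(0) A_t \equiv 0$, and $\int_{\Real_+} \indic_{\{Y^y_t = 0\}}\, dt = \sup_t A_t = 0$. The main obstacle is precisely this divergence argument: one has to recognize that the quadratic vanishing $\sigma^2(a) = O(a)$ near zero, combined with any positive lower bound on $L_t(a)$ near $a=0$, forces the reciprocal integral to blow up logarithmically, which is incompatible with the finite time horizon.
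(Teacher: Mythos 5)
Your proof is correct and follows essentially the same strategy as the paper: you use the occupation time formula together with the non-integrability of $\sigma^{-2}$ near $0$ and the c\`adl\`ag property of local time in the spatial variable to force $L_t(0)=0$, and then relate $L_t(0)$ to the Lebesgue time at zero. The only cosmetic differences are that you obtain the identity $L_t(0)=2\mu(0)A_t$ directly from Tanaka's formula for $Y^+$ (rather than citing the jump-of-local-time formula together with $L_t(0-)=0$, as the paper does), and you frame the divergence step as a contradiction rather than a direct argument.
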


\begin{rem}
As the proof below suggests, for the validity of Lemma \ref{lemma: local time Y} we only use that $\sigma$ is locally $(1/2)$-H\"{o}lder continuous on $\Real_+$, it is strictly positive on $\Real_{++}$, and it satisfies $\sigma(0)=0$.
\end{rem}

\begin{proof}
We fix $y \in \Real_+$ and drop superscripts $y$ from $Y^y$ for the ease of notation.
Since $\langle Y, Y \rangle = \int_0^\cdot \sigma^2(Y_t) \,dt$, it follows from the occupation time formula (see e.g. Theorem 3.7.1 (iii) in \cite{Karatzas-Shreve-BM}) that
 \begin{equation}\label{eq: local time est}
 \begin{split}
  t \geq \int_0^t \indic_{(0,\infty)} (Y_u)\,du = \int_0^t \indic_{(0,\infty)} (Y_u) \,\sigma^{-2}(Y_u) \,d \langle Y,Y\rangle_u = 2 \int_{(0, \infty)} \sigma^{-2}(a) \, L_t(a) \, da,
 \end{split}
 \end{equation}
 in which the first equality follows since $\sigma(y)>0$ for $y>0$. Since $\sigma(0)=0$ and $\sigma$ is $(1/2)$-H\"{o}lder continuous in a neighborhood of $0$, we have that $\sigma(a) \leq C a^{1/2}$ for $a \in [0, a_0]$, where $C$ and $a_0$ are $\Real_{++}$-valued constants. Hence, $\sigma^{-2}$ is not integrable in this neighborhood of $0$. Combining the last fact with the \cadlag property of $L$ in the spatial variable, it can be seen that if $L_t(0)$ were not zero, the right-hand-side of \eqref{eq: local time est} would be equal to infinity. This, however, contradicts with the bound on the leftmost-side of \eqref{eq: local time est}. It then follows from Problem 3.7.6 in \cite{Karatzas-Shreve-BM} and $L_t(0) = 0 = L_t(0-)$ that
 \[
 0 = L_t(0) - L_t(0-) = \mu(0) \int_0^t \indic_{\set{Y_u=0}} \, du.
 \]
Since $\mu(0)>0$,  the result follows.
\end{proof}

\begin{lem} \label{lem: loc time UI}
\[\sup_{\epsilon \in (0,1)} \expec \bra{
\pare{L_{\sigma_n} (\epsilon)}^2} < \infty.\]
\end{lem}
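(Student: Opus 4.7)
The plan is to apply Tanaka's formula for the semimartingale $Y$ at the level $\epsilon$ and then bound each of the resulting terms in $L^2(\prob)$ uniformly in $\epsilon \in (0,1)$, exploiting the fact that everything is controlled on the interval $[0, \sigma_n]$ by the definition of $\sigma_n$.

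More concretely, Tanaka's formula gives
\[
L_{\sigma_n}(\epsilon) = (Y_{\sigma_n} - \epsilon)^+ - (y - \epsilon)^+ - \int_0^{\sigma_n} \indic_{\{Y_u > \epsilon\}} \mu(Y_u)\, du - \int_0^{\sigma_n} \indic_{\{Y_u > \epsilon\}} \sigma(Y_u)\, dB_u.
\]
By definition of $\sigma_n$, we have $Y_u \leq n$ for all $u \in [0, \sigma_n]$ and $\sigma_n \leq T$. Since $\mu$ and $\sigma$ are continuous on $\Real_+$, they are bounded on $[0,n]$; set $K_n := \sup_{z \in [0,n]} (|\mu(z)| + \sigma^2(z)) < \infty$. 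Then the first three terms on the right-hand side are pathwise bounded by $n + y + T K_n$, uniformly in $\epsilon$.

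For the stochastic integral, It\^o's isometry (applied to the stopped martingale) yields
\[
\expec\!\left[\left(\int_0^{\sigma_n} \indic_{\{Y_u > \epsilon\}} \sigma(Y_u)\, dB_u\right)^{\!2}\right] = \expec\!\left[\int_0^{\sigma_n} \indic_{\{Y_u > \epsilon\}} \sigma^2(Y_u)\, du\right] \leq T K_n,
\]
again uniformly in $\epsilon \in (0,1)$. Applying the elementary inequality $(a+b+c+d)^2 \leq 4(a^2+b^2+c^2+d^2)$ to the four terms from Tanaka's formula and taking expectation therefore gives
\[
\expec\!\left[(L_{\sigma_n}(\epsilon))^2\right] \leq 4\bigl((n + y + T K_n)^2 + T K_n\bigr),
\]
and the right-hand side is independent of $\epsilon$, which is exactly the claim.

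There is no real obstacle here: the whole point of stopping at $\sigma_n$ is to reduce to a bounded regime where the coefficients of \eqref{eq:sde-Y} are bounded, so that Tanaka's formula directly delivers a uniform $L^2$ estimate on the local time. The only small care needed is that the indicator $\indic_{\{Y_u > \epsilon\}}$ must be used (rather than $\indic_{\{Y_u \geq \epsilon\}}$) to match the \cadlag convention for the local time adopted in the paper.
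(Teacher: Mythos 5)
Your proposal is correct and follows essentially the same route as the paper: Tanaka's formula at level $\epsilon$, pathwise bounds on the non-stochastic terms via the definition of $\sigma_n$ (which keeps $Y$ in $[0,n]$ and caps the time at $T$), and It\^o isometry for the stochastic integral. The only cosmetic difference is that the paper discards the nonnegative term $(y-\epsilon)^+$ to obtain an upper bound directly, whereas you keep all four Tanaka terms and combine them with the elementary quadratic inequality.
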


\begin{proof}
Let $C \dfn \sup_{y \in [0,n]} \pare{|\mu(y)| + \sigma^2(y)} <
\infty$. From the It\^o-Tanaka-Meyer formula, we obtain
\begin{align*}
L_{\sigma_n}(\epsilon) &\leq \max \set{Y^y_{\sigma_n} - \epsilon , 0} -
\int_0^{\sigma_n} \indic_{(\epsilon, \infty)} (Y^y_t) \mu(Y^y_t) dt  -
\int_0^{\sigma_n} \indic_{(\epsilon, \infty)} (Y^y_t)
\sigma(Y^y_t) dB_t \\
& \leq n + C T - \int_0^{\sigma_n} \indic_{(\epsilon, \infty)} (Y^y_t)
\sigma(Y^y_t) dB_t.
\end{align*}
Furthermore, we have from It\^{o} isometry that
\[
\expec \bra{ \abs{\int_0^{\sigma_n} \indic_{(\epsilon, \infty)} (Y^y_t)
\sigma(Y^y_t) dB_t}^2 } \leq  \expec \bra{ \int_0^{T}
\indic_{(\epsilon, \infty)} (Y^y_t)
\sigma^2(Y^y_t) dt}  \leq   C T.
\]
Combining the last two bounds, we conclude that $\sup_{\epsilon \in
(0,1)} \expec \bra{ \pare{L_{\sigma_n} (\epsilon)}^2} < \infty$.
\end{proof}

\begin{proof}[Proof of Lemma~\ref{lemma: class C}]
 In the sequel, we fix $(x,y,T)\in \Real_{++}$ and drop all superscripts involving $x$, $y$ and $T$ in order to ease notation. Since $v\in C^{2,2,1}(\Real_{++}^3)$ but $Y$ hits zero with positive probability in this case, one cannot directly apply It\^{o}'s Lemma to $V_t$ for $t>\tau_0$. Instead, we apply It\^{o}'s formula to a sequence of processes that approximate $V$.

For $\epsilon \in (0,1]$, define $\ey \dfn \max \set{Y, \epsilon}$.
It follows from the It\^o-Tanaka-Meyer formula that
\[
 d \ey_t = \indic_{(\epsilon, \infty)} (Y_t) \pare{\mu(Y_t) dt +
\sigma(Y_t) dB_t} + d L_t(\epsilon).
\]
Let $\ev$ be defined via $\ev_t := v(S_t, \ey_t, T-t)$ for $t \in [0, T]$.
Since $v \in C^{2,2,1}(\Real_{++}^3)$ and $(S, \ey)$ takes values in
$[n^{-1}, n] \times [\epsilon, n]$ for $t \in [0, \sigma_n]$, we can
apply It\^{o}'s formula on $t \in [0, \sigma_n]$ and obtain
\begin{equation}\label{eq:ito}
\begin{split}
 & \ev_{\cdot \wedge \sigma_n} = v(x, y, T) - \int_0^{\cdot \wedge
\sigma_n} \partial_T v(S_u, \ey_u, T-u) du + \int_0^{\cdot \wedge
\sigma_n} \partial_x v(S_u, \ey_u, T-u) S_u b(Y_u) \, dW_u\\
 & \hspace{.3cm} + \int_0^{\cdot \wedge \sigma_n}  \indic_{(\epsilon,
\infty)} (Y_u) \partial_y v(S_u, \ey_u, T-u) \mu(Y_u) \, du +
\int_0^{\cdot \wedge \sigma_n} \indic_{(\epsilon, \infty)} (Y_u)
\partial_y v(S_u, \ey_u, T-u) \sigma(Y_u) \, dB_u \\
 & \hspace{.3cm} + \int_0^{\cdot \wedge \sigma_n} \partial_y v(S_u,
\ey_u, T-u) \, dL_u(\epsilon) + \frac12 \int_0^{\cdot \wedge \sigma_n}
\indic_{(\epsilon, \infty)} (Y_u) \partial^2_{yy} v(S_u, \ey_u, T-u)
\sigma^2(Y_u) \,du \\
 & \hspace{.3cm} + \int_0^{\cdot \wedge \sigma_n} \indic_{(\epsilon,
\infty)} (Y_u) \partial^2_{xy} v(S_u, \ey_u, T-u) \rho \sigma(Y_u)
b(Y_u) S_u \, du \\
 & \hspace{.3cm} +\frac12 \int_0^{\cdot \wedge \sigma_n}
\partial^2_{xx} v(S_u, \ey_u, T-u) S_u^2 b^2(Y_u)\, du.
\end{split}
\end{equation}
Since $\set{Y > \epsilon} \subseteq \set{\ey = Y}$, it follows from
(BS-PDE) that
\begin{equation}\label{eq: interior-eq}
\begin{split}
&\int_0^{\cdot \wedge \sigma_n} \indic_{(\epsilon, \infty)} (Y_u)
\bra{(\partial_T - \cL) v(S_u, Y_u, T-u)} \, du =0.
\end{split}
\end{equation}
On the other hand, $\int_0^{\cdot \wedge \sigma_n} \partial_y v(S_u,
\ey_u, T-u) \, dL_u(\epsilon) = \int_0^{\cdot \wedge \sigma_n}
\partial_y v(S_u, \epsilon, T-u) \, dL_u(\epsilon)$, following from
the fact that $\int_0^{\cdot \wedge \sigma_n} \indic_{\{Y_u \neq
\epsilon\}} dL_u(\epsilon) =0$. Moreover, the two stochastic integrals in \eqref{eq:ito} are martingales thanks to the choice of $\sigma_n$. As a result, combining \eqref{eq:ito} and \eqref{eq: interior-eq}, and setting
\begin{align*}
\eM \dfn & \ev + \int_0^{\cdot} \indic_{[0,\epsilon]}(Y_u) \partial_T
v(S_u, \epsilon, T-u) du - \int_0^{\cdot} \partial_y v(S_u, \epsilon,
T-u) \, dL_u(\epsilon) \\
& - \frac12 \int_0^{\cdot} \indic_{[0, \epsilon]}(Y_u) \partial^2_{xx}
v(S_u, \epsilon, T-u) S_u^2 b^2(Y_u)\, du,
\end{align*}
we have that $\eM_{\cdot \wedge \sigma_n}$ is a martingale for each $\epsilon\in (0,1)$.

Next we shall study the limit of $\eM$ as $\epsilon\downarrow 0$ and establish
\begin{equation}\label{eq: p conv eps M}
\plim_{\epsilon \downarrow 0} \sup_{t \in [0, T]} \abs{\eM_{t \wedge
\sigma_n} - V_{t \wedge \sigma_n}} = 0.
\end{equation}
First observe that
\begin{equation}\label{eq: conv est}
\begin{split}
 & \sup_{t \in [0, T]} \abs{\eM_{t\wedge \sigma_n} - V_{t \wedge \sigma_n}} \\
 & \hspace{0.3cm} \leq \sup_{t \in [0, T]} \abs{\ev_{t\wedge \sigma_n} - V_{t\wedge \sigma_n}} + \sup_{t \in [0, T]} \int_0^{t\wedge \sigma_n} \indic_{[0,\epsilon]}(Y_u) \abs{\partial_T v(S_u, \epsilon, T-u)} du \\
 & \hspace{0.7cm} + \sup_{t \in [0, T]} \int_0^{t\wedge \sigma_n} \abs{\partial_y v(S_u, \epsilon,
T-u)} \, dL_u(\epsilon)  \\
 & \hspace{0.7cm} +  \frac12 \sup_{t \in [0, T]} \int_0^{t\wedge \sigma_n} \indic_{[0, \epsilon]}(Y_u) \abs{\partial^2_{xx}
v(S_u, \epsilon, T-u)} S_u^2 b^2(Y_u)\, du,
\end{split}
\end{equation}
We will show that each term on the right-hand-side of the previous inequality converges to zero in probability as $\epsilon\downarrow 0$. Let us denote $\D_n=[n^{-1}, n]\times [0,n]\times [T/n,T]$.
First, the convergence of the first term follows from the continuity of $v$. Second, since $\partial_T v\in C(\Real_{++} \times \Real_+ \times \Real_{++})$, we have from Lemma~\ref{lemma: local time Y} that
\[
 \plim_{\epsilon\downarrow 0} \sup_{t \in [0, T]} \int_0^{t\wedge \sigma_n} \indic_{[0,\epsilon]}(Y_u) \abs{\partial_T v(S_u, \epsilon, T-u)} du \leq \sup_{(x,y,s)\in \D_n} \abs{\partial_T v(x,y,s)} \int_0^{T\wedge \sigma_n} \indic_{\set{Y_u=0}} \,du =0.
\]
Since $\limsup_{y\downarrow 0} b^2(y)x^2 \partial^2_{xx} v(x,y,t) <\infty$ for $(x,t)\in [n^{-1}, n]$ and $\partial^2_{xx} v$ is continuous in the interior of $\D_n$, then an argument similar to the previous estimate shows that the fourth term in \eqref{eq: conv est} also converges to zero. Finally, using Lemma~\ref{lemma: local time Y} again, we have the following estimate for the third term,
\[
 \plim_{\epsilon \downarrow 0} \sup_{t \in [0, T]} \int_0^{t\wedge \sigma_n} \abs{\partial_y v(S_u, \epsilon,
T-u)} \, dL_u(\epsilon) \leq \sup_{(x,y,s)\in \D_n} \abs{\partial_y v(x,y,s)} \cdot \plim_{\epsilon \downarrow 0} L_{T \wedge \sigma_n}(\epsilon) =0,
\]
where the last identity follows from the right-continuity of $\epsilon \mapsto L_\cdot(\epsilon)$ and $L (0) = \mu(0) \int_0^{\cdot} \indic_{\set{Y_u = 0}} du$ --- see Theorem 3.7.1 (iv) and Problem 3.7.6 in \cite{Karatzas-Shreve-BM}. As a result, \eqref{eq: p conv eps M} follows combining all the previous estimates.

To finish the proof, we shall show that $V_{\cdot \wedge \sigma_n}$ is a martingale. Using again the facts that $v\in \fC$ and $(S, \ey)$ takes values in $[n^{-1}, n] \times [\epsilon, n]$, we obtain the existence of $C\in \Real_{++}$ (depending on $v$ as well as $n$ but independent of $\epsilon$), such that
\[
 \sup_{t\in [0,T]}\abs{\eM_{t \wedge \sigma_n} - V_{t\wedge \sigma_n}} \leq C(1+ L_{\sigma_n}(\epsilon)), \quad \text{ for } \epsilon \in (0,1].
\]
An application of Lemma~\ref{lem: loc time UI} ensures the uniform integrability of $\set{\sup_{t\in [0,T]}\abs{\eM_{t \wedge \sigma_n} - V_{t\wedge \sigma_n}}}_{\epsilon \in (0,1]}$. As a result, we obtain
\[
\lim_{\epsilon \downarrow 0} \expec \bra{\sup_{t \in [0, T]} \abs{\eM_{t \wedge
\sigma_n} - V_{t \wedge \sigma_n}}} = 0.
\]
Combining it with the martingale property of $\eM$ for each $\epsilon \in (0,1)$, we conclude that $V_{\cdot \wedge \sigma_n}$ is a martingale.
\end{proof}

Now we are ready to present the relationship between classical solutions and stochastic solutions.
\begin{prop}\label{prop: classical->stochastic}
 Any classical solution to \eqref{eq: pricing eq} is a stochastic solution.
\end{prop}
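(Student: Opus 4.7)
The plan is to verify the two defining conditions of a stochastic solution for every classical solution $v$. Condition (ii), $v(x,y,0)=g(x)$, is built directly into Definition~\ref{def: classical soln} in each of the three cases (it is part of \eqref{eq: pricing eq} in Cases (A)–(B) and is imposed explicitly in Case (C)), so the entire work is to establish condition (i): that $V^{x,y,T} = v(S^{x,y}_\cdot, Y^y_\cdot, T-\cdot)$ is a local martingale on $[0,T]$ for every $(x,y,T) \in \Real_{++}^3$. I would split the argument along the three cases of Definition~\ref{def: classical soln}, since the obstruction to applying It\^o's formula — whether, and how, $Y$ can reach the boundary $\{y=0\}$ — differs in each.

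In Case (A), $Y^y$ stays in $\Real_{++}$ for all time, and $S^{x,y}$ is strictly positive by definition of $H^y$, so $(S^{x,y},Y^y)$ takes values in $\Real_{++}^2$. Localizing by the first exit time from compact subdomains of $\Real_{++}^3$, It\^o's formula combined with $\partial_T v = \cL v$ shows that $V^{x,y,T}$ is a local martingale, and the localizing sequence can be taken so that the stopping times converge a.s.\ to $T$. In Case (B), run the same interior It\^o argument on $[0,\tau^y_0 \wedge T]$ and then exploit the structure of the boundary: since $\mu(0)=0$ the point $0$ is absorbing for $Y^y$, and since $b(0)=0$ the process $S^{x,y}$ is also frozen after $\tau^y_0$. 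Consequently $V^{x,y,T}_t = v(S^{x,y}_{\tau^y_0},0,T-t)$ for $t \in [\tau^y_0 \wedge T,T]$, which by the Dirichlet condition \eqref{eq: boundary cond 2} equals $g(S^{x,y}_{\tau^y_0})$; so $V^{x,y,T}$ is constant, hence a (local) martingale, after $\tau^y_0$. Concatenating the two pieces gives the local martingale property on all of $[0,T]$.

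Case (C) is the substantive one, because here $Y^y$ visits $0$ but reflects instantaneously, so $V^{x,y,T}$ cannot be frozen on $\{Y^y=0\}$, and the approximation $v \in \overline{\fC}$ is only pointwise. The idea is to leverage Lemma~\ref{lemma: class C}: pick any sequence $\{v_k\}_{k} \subset \fC$ with $v_k \to v$ pointwise, and observe crucially that the localizing times $\sigma_n$ of that lemma depend only on $n$ and not on $v_k$. Therefore, for every fixed $n$, each stopped process $v_k(S^{x,y}_{\cdot \wedge \sigma_n}, Y^y_{\cdot \wedge \sigma_n}, T - \cdot \wedge \sigma_n)$ is a martingale on $[0,T]$. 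On $[0,\sigma_n]$ the state $(S^{x,y},Y^y)$ stays in $[n^{-1},n]\times[0,n]$, and since $v_k \leq h$ and $h$ is locally bounded, the family $\{v_k(S^{x,y}_{\cdot \wedge \sigma_n}, Y^y_{\cdot \wedge \sigma_n}, T - \cdot \wedge \sigma_n)\}_k$ is uniformly bounded. Pointwise convergence $v_k \to v$ together with the dominated convergence theorem (applied also under conditional expectations) then transfers the martingale property to $V^{x,y,T}_{\cdot \wedge \sigma_n}$. Sending $n \to \infty$, we have $\sigma_n \uparrow T$ a.s., so $(\sigma_n)_n$ is a valid localizing sequence, and condition (i) is established.

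The main obstacle is the pointwise-limit step in Case (C): one has to make sure that the natural localization produced by Lemma~\ref{lemma: class C} is strong enough to dominate the whole sequence $\{v_k\}$ uniformly on the stopped paths. This is exactly where the growth domination $v_k \leq h$ built into the class $\fC$ (and inherited by $\overline{\fC}$) is essential, and also why it matters that the localizer $\sigma_n$ in Lemma~\ref{lemma: class C} is extrinsic to the approximating function.
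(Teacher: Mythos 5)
Your three-case decomposition and the core devices — interior It\^o plus positivity of $(S,Y)$ in Case (A), the frozen-after-$\tau^y_0$ observation plus the Dirichlet condition in Case (B), and the pointwise approximation through $\fC$ via Lemma~\ref{lemma: class C} in Case (C) — all match the paper's proof. Case (A) and Case (C) are essentially complete: in (A) the localizer converges to $T$ a.s., which is the definition of a local martingale on $[0,T]$; in (C) your key observation that the localizer $\sigma_n$ in Lemma~\ref{lemma: class C} is \emph{extrinsic} to the approximating sequence $v_k$, together with the bound $v_k(S_{\cdot\wedge\sigma_n},Y_{\cdot\wedge\sigma_n},\cdot) \le h(S_{\cdot\wedge\sigma_n}) \le h(n)$, lets you transfer the stopped-martingale property through the pointwise limit (the paper phrases this via uniform integrability, you via uniform boundedness; both are fine since $S$ stays in $[n^{-1},n]$ before $\sigma_n$).

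The gap is the single sentence in Case (B): ``Concatenating the two pieces gives the local martingale property on all of $[0,T]$.'' That concatenation is precisely the one step the paper isolates into a separate technical result (Lemma~\ref{lemma: local mart}), and it is not formal. After the interior It\^o argument you have a sequence $\sigma_n$ converging to $\tau^y_0 \wedge T$ — \emph{not} to $T$ — with each $V^{x,y,T}_{\cdot\wedge\sigma_n}$ a martingale, plus the extra information that $V^{x,y,T}$ is constant after $\tau^y_0$. To conclude that $V^{x,y,T}$ is a local martingale on $[0,T]$ you must exhibit a localizing sequence $\rho_k \uparrow T$, and this requires an argument: the paper exploits the nonnegativity of $V^{x,y,T}$ and Doob's maximal inequality on each nonnegative martingale $V^{x,y,T}_{\cdot\wedge\sigma_n}$ to establish $\sup_{t\le T} V^{x,y,T}_t < \infty$ a.s., then takes $\rho_k = \inf\{t : V^{x,y,T}_t \ge k\}$ and passes to the limit by bounded convergence. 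Without that (or an equivalent) argument, frozen-after-$\tau^y_0$ together with martingality on $[0,\sigma_n]$ does not by itself produce a valid localizer converging to $T$. Inserting Lemma~\ref{lemma: local mart} at your ``concatenating'' step completes the proof and brings it into exact agreement with the paper's.
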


The following result will be useful in proving the above proposition.
\begin{lem}\label{lemma: local mart}
Let $\sigma$ be a stopping time and $Z$ be a nonnegative continuous-path process with $Z = Z_{\sigma \wedge \cdot}$. If there exists a nondecreasing sequence of stopping times $\set{\sigma_n}_{n\in \Natural}$ with $\prob \bra{\lim_{n\rightarrow \infty} \sigma_n = \sigma} = 1$ such that $Z_{\sigma_n \wedge \cdot}$ is a martingale for all $n \in \Natural$, then $Z$ is a local martingale.
\end{lem}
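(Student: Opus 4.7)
The plan is to construct an explicit localizing sequence for $Z$ by combining the hypothesized sequence $\{\sigma_n\}_{n \in \Natural}$ with first-passage times of $Z$ above integer levels, and then to pass to the limit via bounded convergence.

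Concretely, I would define $\rho_n \dfn \inf\set{t \geq 0 \such Z_t \geq n}$ for each $n \in \Natural$. Since $Z$ is adapted with continuous paths (so that $\rho_n$ is a stopping time) and a.s.\ $\Real_+$-valued, path continuity forces $\rho_n \uparrow \infty$ a.s.\ as $n \to \infty$. Moreover, continuity again yields $Z_{t \wedge \rho_n} \leq n$ for all $t \geq 0$, so the stopped process $Z^{\rho_n}$ is uniformly bounded by $n$. Once $Z^{\rho_n}$ is shown to be a martingale for every $n$, the family $\{\rho_n\}_{n \in \Natural}$ is a localizing sequence for $Z$, and the conclusion follows.

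To prove that $Z^{\rho_n}$ is a martingale, fix $n$ and observe that for each $m$ the doubly-stopped process $Z^{\sigma_m \wedge \rho_n} = (Z^{\sigma_m})^{\rho_n}$ is a (bounded) martingale by optional stopping. I would then show that for every $t \geq 0$ one has $Z_{\sigma_m \wedge \rho_n \wedge t} \to Z_{\rho_n \wedge t}$ a.s.\ as $m \to \infty$. The bounded convergence theorem then upgrades the martingale identity from $Z^{\sigma_m \wedge \rho_n}$ to $Z^{\rho_n}$, completing the argument.

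The only slightly delicate step, and the place where the hypothesis $Z = Z_{\sigma \wedge \cdot}$ is essential, is the pointwise convergence just described. On the event $\{\rho_n = \infty\}$ one has $\sigma_m \wedge \rho_n \wedge t = \sigma_m \wedge t \uparrow \sigma \wedge t$, and constancy of $Z$ past $\sigma$ gives $Z_{\sigma \wedge t} = Z_t = Z_{\rho_n \wedge t}$. On the event $\{\rho_n < \infty\}$, path continuity together with $Z = Z_{\sigma \wedge \cdot}$ forces $\rho_n \leq \sigma$ (otherwise $Z_{\rho_n} = Z_\sigma \geq n$, so that $\rho_n \leq \sigma$ by the very definition of $\rho_n$, a contradiction); hence $\sigma_m \wedge \rho_n \uparrow \rho_n$, and continuity of the paths of $Z$ delivers the claimed convergence. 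This verification is the main obstacle; once it is in place, the bounded convergence step closes the proof.
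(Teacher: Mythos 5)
Your proof is correct, and its overall architecture is the same as the paper's: both localize $Z$ by the first-passage times $\rho_n = \inf\{t \geq 0 : Z_t \geq n\}$ and pass the martingale property from the doubly-stopped process $Z^{\sigma_m \wedge \rho_n}$ to $Z^{\rho_n}$ by exploiting the boundedness of the stopped process (the paper phrases this last step as uniform integrability of $\{Z_{\sigma_m \wedge \rho_n}\}_{m}$, which in the presence of the bound by $n$ is the same thing as your bounded convergence step). The one genuine difference is in how $\rho_n \uparrow \infty$ is established. The paper applies Doob's maximal inequality to each of the nonnegative martingales $Z^{\sigma_n}$, then sends $n \to \infty$ using $Z = Z^{\sigma}$ and $\sigma_n \to \sigma$ to deduce $\sup_{t \in \Real_+} Z_t < \infty$ a.s. You sidestep this entirely: continuity of the (finite-valued) paths already gives $\sup_{t \in [0,T]} Z_t < \infty$ a.s. for every $T$, and this local boundedness is all that $\rho_n \to \infty$ requires. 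Your route is thus more elementary and reveals that Doob's inequality, while convenient, is not logically necessary here. Your explicit check of the pointwise convergence $Z_{\sigma_m \wedge \rho_n \wedge t} \to Z_{\rho_n \wedge t}$, in particular the observation that $\rho_n \leq \sigma$ on $\{\rho_n < \infty\}$ via the constancy of $Z$ past $\sigma$, makes rigorous a step the paper leaves implicit.
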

\begin{proof}
As $Z_{\sigma_n \wedge \cdot}$ is a nonnegative martingale, we have $\prob \bra{\sup_{t \in [0, \sigma_n]} Z_t > \ell} \leq 1 / \ell$ for all $n \in \Natural$ and $\ell \in \Real_+$. Since $Z = Z_{\sigma \wedge \cdot}$ and $\prob \bra{\lim_{n\rightarrow \infty} \sigma_n = \sigma} = 1$, we get $\prob \bra{\sup_{t \in \Real_+} Z_t < \infty} = 1$. Therefore, defining $\tsigma_k:= \inf \set{t \in \Real_+ \such Z_t \geq k}$ for $k \in \Natural$, we have $\prob \bra{\lim_{k\rightarrow \infty} \tsigma_k = \infty} = 1$. Furthermore, $\set{Z_{\sigma_n \wedge \tsigma_k}}_{n\in \Natural}$ is a uniformly integrable family for each $k$; indeed, this follows because $\prob \bra{\sup_{t\in [0, \tsigma_k]} Z_{t} \leq k} = 1$. We infer that $Z_{\tsigma_k  \wedge \cdot}$ is a martingale for each $k \in \Natural$, which concludes the proof.
\end{proof}

\begin{proof}[Proof of Proposition~\ref{prop: classical->stochastic}]
 For fixed $(x,y,T)\in \Real_{++}^3$, recall that $V^{x,y,T}_t = v(S^{x,y}_t, Y^y_t, T-t)$ for $t\in [0,T]$. We define $V^{x,y,T}_{\cdot}$ on $\Real_+$ via $V^{x,y,T}_{\cdot} = V^{x,y,T}_{\cdot \wedge T}$. Thanks to the previous lemma, to show that $V^{x,y,T}_{\cdot}$ is a local martingale on $[0,T]$, it suffices to find a sequence of stopping times $\set{\sigma_n}_{n\in \Natural}$ such that $\prob[\limn \sigma_n = T]=1$ and $V^{x,y,T}_{\cdot \wedge \sigma_n}$ is a martingale for each $n$. We shall use this observation to prove the statement in each case of Definition~\ref{def: classical soln}.

 \underline{Case (A)}: Consider $\sigma_n := \inf\set{t \in \Real_+ \such (S^{x,y}_t, Y^y_t) \notin [n^{-1}, n]^2}\wedge (T- T/n)$ for each $n \in \Natural$. Given a classical solution $v$, it follows from It\^{o}'s formula that $V^{x, y, T}_{\cdot \wedge \sigma_n}$ is a martingale. As $\prob \bra{\tau_0=\infty}=1$, $\prob \bra{\lim_{n\rightarrow \infty} \sigma_n = T} = 1$; therefore, $V_{\cdot}^{x, y, T}$ is a local martingale on $[0,T]$ thanks to Lemma~\ref{lemma: local mart}.

 \underline{Case (B)}: Given such a classical solution $v$, the same argument as in case (A) implies that $V^{x, y, T}_{\cdot \wedge \sigma_n}$ is a martingale on $[0,T]$ for any $n\in \Natural$. Since $\prob \bra{\tau^y_0 <\infty}>0$ in this case, we have $\prob \bra{\limn \sigma_n = \tau^y_0}=1$. However, the boundary condition \eqref{eq: boundary cond 2} implies that $V^{x, y, T}_{\cdot} = V^{x, y, T}_{\cdot \wedge \tau^y_0}$. Invoking Lemma \ref{lemma: local mart}, we conclude that $V^{x, y, T}$ is a local martingale.

 \underline{Case (C)}: Since $v\in \overline{\fC}$, there exists a sequence $\set{v^m}_{m\in \Natural}$ such that each $v^m \in \fC$ and $\set{v^m}_{m\in \Natural}$ converges to $v$ point-wise. It then follows from Lemma~\ref{lemma: class C} that $V^{m; x,y,T}_{\cdot \wedge \sigma_n} := v^m\pare{S^{x,y}_{\cdot \wedge \sigma_n}, Y^y_{\cdot \wedge \sigma_n}, T- \cdot \wedge \sigma_n}$ is a martingale on $[0,T]$, where $\sigma_n$ is defined in Lemma~\ref{lemma: class C}. On the other hand, since each $v^m$ is dominated by $h$, $V^{m; x,y,T}_{t\wedge \sigma_n} =v^m\pare{S^{x,y}_{t\wedge \sigma_n}, Y^y_{t\wedge \sigma_n}, T- t\wedge \sigma_n} \leq h\pare{S^{x,y}_{t\wedge \sigma_n}}$ for any $t\in [0,T]$ and $m\in \Natural$. Combining the previous inequality with $\expec\bra{h\pare{S^{x,y}_{t\wedge \sigma_n}}} \leq h\pare{\expec[S^{x,y}_T]} \leq h(x)<\infty$, we obtain that $\set{V^{m; x,y,T}_{t\wedge \sigma_n}}_{m\in \Natural}$ is a uniformly integrable family. Therefore, for any $s\in [0,t]$,
 \[
  \expec\bra{V^{x,y,T}_{t\wedge \sigma_n} \,|\, \F_s} = \lim_{m\to \infty} \expec\bra{V^{m; x,y,T}_{t\wedge \sigma_n} \,|\, \F_s} = \lim_{m\to \infty} V^{m; x,y,T}_{s \wedge \sigma_n} = V^{x,y,T}_{s\wedge \sigma_n},
 \]
 which confirms that $V^{x,y,T}_{\cdot \wedge \sigma_n}$ is a martingale on $[0,T]$. It is clear that $\prob[\lim_{n\to \infty} \sigma^n = T]=1$, then $V^{x,y,T}$ is a local martingale thanks to Lemma~\ref{lemma: local mart}.
\end{proof}

\subsection{Proofs of Theorems~\ref{thm: existence} and \ref{thm: uniqueness}}\label{sec:fsb}
\begin{proof}[Proof of Theorem~\ref{thm: existence}]
Proposition~\ref{prop: u classical soln} has already established that $u$ is a classical solution. The minimality property follows from  Propositions~\ref{prop: u classical soln}, \ref{prop: classical->stochastic}, and \ref{prop: existence}.
\end{proof}

\begin{proof}[Proof of Theorem~\ref{thm: uniqueness}]
 We will only prove the statement when $g$ is of linear growth, i.e., $\eta = \limsup_{x\to \infty} g(x)/x >0$, the proof for the strictly sublinear growth $g$ can be performed similarly.

 Given a classical solution $v$ dominated by $h$, $v$ is also a stochastic solution thanks to Proposition~\ref{prop: classical->stochastic}. Proposition~\ref{prop: uniqueness} implies that when  $S$ is a martingale, $v\equiv u$ on $\Real_+^3$. If $S$ is a strict local martingale, Proposition~\ref{prop: u classical soln} and Corollary~\ref{cor: delta} combined imply that both $u$ and $u+\eta \delta$ are both classical solutions dominated by $h$ (see \eqref{eq: u+eta delta}). However, they are different solutions since $\delta>0$. Then the statement in item (ii) is confirmed.

Let us consider the last statement of the theorem. It is clear that the comparison result implies the uniqueness of classical solutions. Conversely, when $g$ is of linear growth, we shall show that the martingale property of $S$ implies the comparison result. To this end, an argument similar to Proposition~\ref{prop: classical->stochastic} gives that $v\pare{S^{x,y}_\cdot, Y^y_\cdot, T-t}$ is a local supermartingale and $w\pare{S^{x,y}_\cdot, Y^y_\cdot, T-t}$ is a local submartingale, for any $(x,y,T)\in \Real_{++}^3$. Since they are both dominated by the martingale $M(1+ S^{x,y}_{\cdot})$, in fact, $v\pare{S^{x,y}_\cdot, Y^y_\cdot, T-t}$ is a supermartingale and $w\pare{S^{x,y}_\cdot, Y^y_\cdot, T-t}$  is a submartingale. As a result,
 \[
  v(x,y,T) \geq \expec\bra{v\pare{S^{x,y}_T, Y^y_T, 0}} \geq \expec\bra{g\pare{S^{x,y}_T}} \geq \expec\bra{w \pare{S^{x,y}_T, Y^y_T, 0}} \geq w(x,y,T), \quad \text{ on } \Real_{++}^3.
 \]
Finally, the inequality $v\geq w$ can be extended to $\Real_+^3$ thanks to the continuity of $v$ and $w$.
\end{proof}

\bibliographystyle{siam}
\bibliography{biblio}
\end{document}